\documentclass[final,leqno]{siamltex}
\usepackage{epsfig}
\usepackage{amsmath}
\usepackage{amssymb}
\usepackage{tikz}
\usepackage{graphicx}
\usepackage{float}
\usepackage[notcite,notref]{showkeys}
\newtheorem{algorithm}{Weak Galerkin Algorithm}
\newcommand{\bq}{{\bf q}}
\newcommand{\bn}{{\bf n}}
\newcommand{\bx}{{\bf x}}

\def\T{{\mathcal T}}
\def\E{{\mathcal E}}
\def\Q{{\mathbb Q}}

\def\l{{\langle}}
\def\r{{\rangle}}

\def\bn{{\bf n}}
\def\bq{{\bf q}}

\def\bbQ{\mathbb{Q}}
\newcommand{\pT}{{\partial T}}

\def\3bar{{|\hspace{-.02in}|\hspace{-.02in}|}}

  \def\b#1{\mathbf{#1}} 
\def\a#1{\begin{align*}#1\end{align*}} \def\an#1{\begin{align}#1\end{align}} 
 
\def\p#1{\begin{pmatrix}#1\end{pmatrix}} 
  \numberwithin{equation}{section}
\numberwithin{table}{section} \numberwithin{figure}{section}

\setlength{\parskip}{1\parskip}

\title{A new weak gradient for the stabilizer free weak Galerkin method
    with polynomial reduction }

\author{ Xiu Ye\thanks{Department of
Mathematics, University of Arkansas at Little Rock, Little Rock, AR
72204 (xxye@ualr.edu). This research was supported in part by
National Science Foundation Grant DMS-1620016.}
\and
Shangyou Zhang\thanks{Department of
Mathematical Sciences, University of Delaware, Newark, DE 19716, U.S.A. (szhang@udel.edu).}
}

\begin{document}

\maketitle

\begin{abstract}
The weak Galerkin (WG) finite element method is an effective and flexible general numerical technique for solving partial differential equations. It is a natural extension of the classic conforming finite element method for discontinuous approximations, which maintains simple finite element formulation.  Stabilizer free weak Galerkin methods further simplify the WG methods and reduce computational complexity.
This paper explores the possibility of optimal combination of polynomial spaces that minimize the number of unknowns in the stabilizer free WG schemes without compromising the accuracy of the numerical approximation.
A new stabilizer free weak Galerkin finite element method is proposed and analyzed with polynomial degree reduction. To achieve such a goal, a new definition of weak gradient is introduced. Error estimates of optimal order are established for the corresponding WG approximations in both a discrete $H^1$ norm and the standard $L^2$ norm. The numerical examples are tested on various meshes and confirm the theory.
\end{abstract}

\begin{keywords}
weak Galerkin, finite element methods, weak gradient, second-order
elliptic problems, stabilizer free.
\end{keywords}

\begin{AMS}
Primary: 65N15, 65N30; Secondary: 35J50
\end{AMS}
\pagestyle{myheadings}

\section{Introduction}\label{Section:Introduction}
The weak Galerkin finite element method is an effective and flexible numerical technique
for solving partial differential equations.
The main idea of weak Galerkin finite element methods is the use of weak functions and their corresponding discrete weak derivatives in algorithm design. The WG method was first introduced in \cite{wy,wymix} and then has been applied to solve various partial differential equations
\cite{hmy,Lin2014,lyzz,mwy-helm,mwy-biha,mwy-soe,mwyz-maxwell, mwyz-interface,mwy-brinkman,Shields,ww-div-curl,wy-stokes}.

A stabilizing/penalty term is often essential in finite element methods with discontinuous approximations to enforce connection of discontinuous functions across element boundaries.
Removing stabilizers from discontinuous finite element methods  simplifies finite element formulations
and reduces programming complexity. Stabilizer free WG finite element methods have been studied in \cite{yzwg, aw, yzz}. The idea is increasing the connectivity of a weak function across element boundary by raising the degree of polynomials for computing weak derivatives. In \cite{yzwg}, we have proved that a stabilizer can be removed from the WG finite element formulation for the WG element $(P_k(T),P_k(e),[P_{j}(T)]^d)$ if $j\ge k+n-1$, where $n$ is the number of edges/faces of an element. The condition $j\ge k+n-1$ has been relaxed in \cite{aw}. Stabilizer free DG methods have also been developed in \cite{yzcdg1,yzcdg2}.

For simplicity, we demonstrate the idea by using the second order elliptic problem that seeks an
unknown function $u$ satisfying
\begin{eqnarray}
-\nabla\cdot (a\nabla u)&=&f\quad \mbox{in}\;\Omega,\label{pde}\\
u&=&g\quad\mbox{on}\;\partial\Omega,\label{bc}
\end{eqnarray}
where $\Omega$ is a polytopal domain in $\mathbb{R}^d$, $\nabla u$ denotes the gradient of
the function $u$, and $a$ is a symmetric
$d\times d$ matrix-valued function in $\Omega$. We shall assume that
there exist two positive numbers $\lambda_1,\lambda_2>0$ such that
\begin{equation}\label{ellipticity}
\lambda_1 \xi^t\xi\le \xi^ta\xi\le \lambda_2 \xi^t\xi,\qquad\forall
\xi\in\mathbb{R}^d.
\end{equation}
Here $\xi$ is understood as a column vector and $\xi^t$ is the transpose
of $\xi$.

The goal of this paper is to propose and analyze a stabilizer free weak Galerkin
method for (\ref{pde})-(\ref{bc}) by using less number of unknowns than that of \cite{yzwg} without compromising the order of
convergence. The WG scheme will use the configuration of $(P_k(T), P_{k-1}(e)$, $P_{j}(T)^d)$. For the WG element $(P_k(T), P_{k-1}(e)$, $P_{j}(T)^d)$, the stabilizer free WG method with the standard definition of weak gradient only produces suboptimal convergence rates in both energy norm and the $L^2$ norm, shown in Table \ref{t111} (\cite{y-z-table}).

\begin{table}[h]
 \begin{center}\caption{Weak gradient calculated by (\ref{d-d-1}), $\3bar\cdot\3bar=O(h^{r_1})$ and $\|\cdot\|=O(h^{r_2})$.}
    \label{t111}
\begin{tabular}{|c|c|c|c|c|}
\hline
$P_{k}(T)$ & $P_{k-1}(e)$ &  $[P_{k+1}(T)]^d$  &  $r_1$  &  $r_2$ \\
\hline
$P_1(T)$ & $P_0(e)$ & $[P_2(T)]^2$ & $0$ & $0$ \\
\hline
$P_2(T)$ & $P_1(e)$ & $[P_3(T)]^2$ &  $1$ & $2$  \\
\hline
$P_3(T)$ & $P_2(e)$ & $[P_4(T)]^2$ & $2$ & $3$  \\
\hline
\end{tabular}
\end{center}
\end{table}

The standard definition for a weak gradient  $\nabla_wv$ of a weak function  $v=\{v_0,v_b\}$  is a piecewise vector valued polynomial such that on each $T\in\T_h$,  $\nabla_w v \in [P_{k+1}(T)]^2$ satisfies (\cite{wy,wymix, mwy-soe})
\begin{equation}\label{d-d-1}
  (\nabla_w v, \bq)_T = -(v_0, \nabla\cdot \bq)_T+ \langle v_b, \bq\cdot\bn\rangle_{\partial T}\qquad
   \forall \bq\in [P_{j}(T)]^2.
\end{equation}
A new way of defining weak gradient is introduced in (\ref{d-d}) for the WG element $(P_{k}(T),P_{k-1}(e),[P_{j}(T)]^2)$ such that the our new corresponding stabilizer free WG approximation converges to the true solution with optimal order convergence rates, shown in Table \ref{t222}.

\begin{table}[h]
 \begin{center}
\caption{Weak gradient calculated by (\ref{d-d}), $\3bar\cdot\3bar=O(h^{r_1})$ and $\|\cdot\|=O(h^{r_2})$.}
    \label{t222}
\begin{tabular}{|c|c|c|c|c|}
\hline
$P_{k}(T)$ & $P_{k-1}(e)$ &  $[P_{k+1}(T)]^d$  &  $r_1$  &  $r_2$ \\
\hline
$P_1(T)$ & $P_0(e)$ & $[P_2(T)]^2$ & $1$ & $2$ \\
\hline
$P_2(T)$ & $P_1(e)$ & $[P_3(T)]^2$ &  $2$ & $3$  \\
\hline
$P_3(T)$ & $P_2(e)$ & $[P_4(T)]^2$ & $3$ & $4$  \\
\hline
\end{tabular}
\end{center}
\end{table}

We also prove the optimal convergence rates theoretically for the stabilizer free WG approximation in an energy norm  and in the $L^2$ norm. The numerical examples are tested on different finite element partitions.

\section{Weak Galerkin Finite Element Schemes}

Let ${\cal T}_h$ be a partition of the domain $\Omega$ consisting of
polygons in two dimension or polyhedra in three dimension satisfying
a set of conditions specified in \cite{wymix}. Denote by
${\cal E}_h$ the set of all edges/faces in ${\cal T}_h$, and let
${\cal E}_h^0={\cal E}_h\backslash\partial\Omega$ be the set of all
interior edges/faces. For simplicity, we will use term edge for edge/face without confusion.

For a given integer $k \ge 1$, let $V_h$ be the weak Galerkin finite
element space associated with $\T_h$ defined as follows
\begin{equation}\label{vhspace}
V_h=\{v=\{v_0,v_b\}:\; v_0|_T\in P_k(T),\ v_b|_e\in P_{k-1}(e),\ e\subset\pT,  T\in \T_h\}
\end{equation}
and its subspace $V_h^0$ is defined as
\begin{equation}\label{vh0space}
V^0_h=\{v: \ v\in V_h,\  v_b=0 \mbox{ on } \partial\Omega\}.
\end{equation}
We would like to emphasize that any function $v\in V_h$ has a single
value $v_b$ on each edge $e\in\E_h$.

For $v=\{v_0,v_b\}\in V_h\cup H^1(\Omega)$, a weak gradient $\nabla_wv$ is a piecewise vector valued polynomial such that on each $T\in\T_h$,  $\nabla_w v |_T \in [P_{j}(T)]^d$ satisfies
\begin{equation}\label{d-d}
  (\nabla_w v, \bq)_T = (\nabla v_0,  \bq)_T+ \langle Q_b(v_b- v_0), \bq\cdot\bn\rangle_{\partial T}\qquad
   \forall \bq\in [P_{j}(T)]^d,
\end{equation}
where $j>k$ depends on the shape of the elements and will be determined later.
In the above equation, we let $v_0=v$ and $v_b=v$ if $v\in H^1(\Omega)$.

For simplicity, we adopt the following notations,
\begin{eqnarray*}
(v,w)_{\T_h} &=&\sum_{T\in\T_h}(v,w)_T=\sum_{T\in\T_h}\int_T vw d\bx,\\
 \l v,w\r_{\partial\T_h}&=&\sum_{T\in\T_h} \l v,w\r_\pT=\sum_{T\in\T_h} \int_\pT vw ds.
\end{eqnarray*}

Let $Q_0$ and $Q_b$ be the two element-wise defined $L^2$ projections onto $P_k(T)$ and $P_{k-1}(e)$  on each $T\in\T_h$, respectively. Define $Q_hu=\{Q_0u,Q_bu\}\in V_h$. Let $\Q_h$ be the element-wise defined $L^2$ projection onto $[P_{j}(T)]^d$ on each element $T\in\T_h$.

\begin{algorithm}
A numerical approximation for (\ref{pde})-(\ref{bc}) can be
obtained by seeking $u_h=\{u_0,u_b\}\in V_h$
satisfying $u_b=Q_bg$ on $\partial\Omega$ and  the following equation:
\begin{equation}\label{wg}
(a\nabla_wu_h,\nabla_wv)_{\T_h}=(f,\; v_0) \quad\forall v=\{v_0,v_b\}\in V_h^0.
\end{equation}
\end{algorithm}

The following lemma reveals a nice property of the weak gradient.
\begin{lemma}
Let $\phi\in H^1(\Omega)$, then on any $T\in\T_h$, we have
\begin{eqnarray}
\nabla_w \phi &=&\Q_h\nabla\phi.\label{key1}
\end{eqnarray}
\end{lemma}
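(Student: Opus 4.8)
The plan is to start from the defining relation \eqref{d-d} for $\nabla_w\phi$ with the weak function taken to be $\{v_0,v_b\}=\{\phi,\phi\}$, and compare it with the $L^2$ projection $\Q_h\nabla\phi$ tested against the same polynomial space $[P_j(T)]^d$. Since $\nabla_w\phi|_T\in[P_j(T)]^d$ and $\Q_h\nabla\phi|_T\in[P_j(T)]^d$, it suffices to show that these two polynomials have the same inner product against every $\bq\in[P_j(T)]^d$ on each $T$.

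First I would write out the right-hand side of \eqref{d-d} in this case:
\begin{equation*}
(\nabla_w\phi,\bq)_T=(\nabla\phi,\bq)_T+\langle Q_b(\phi-\phi),\bq\cdot\bn\rangle_{\partial T}=(\nabla\phi,\bq)_T,
\end{equation*}
because the boundary term vanishes identically ($Q_b(\phi-\phi)=Q_b(0)=0$). On the other hand, by the definition of the $L^2$ projection $\Q_h$ onto $[P_j(T)]^d$ we have $(\Q_h\nabla\phi,\bq)_T=(\nabla\phi,\bq)_T$ for all $\bq\in[P_j(T)]^d$. Hence $(\nabla_w\phi-\Q_h\nabla\phi,\bq)_T=0$ for all such $\bq$, and since the difference lies in $[P_j(T)]^d$ we may take $\bq=\nabla_w\phi-\Q_h\nabla\phi$ to conclude it is zero on $T$. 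Summing over $T\in\T_h$ gives \eqref{key1}.

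There is essentially no obstacle here; the only thing to be careful about is the convention, stated right after \eqref{d-d}, that one sets $v_0=v_b=\phi$ when $\phi\in H^1(\Omega)$, which is precisely what makes the jump term $Q_b(v_b-v_0)$ disappear. This is the whole point of the new definition \eqref{d-d}: unlike the classical definition \eqref{d-d-1}, the "commuting" identity $\nabla_w\phi=\Q_h\nabla\phi$ holds without any correction term, and this is what will later be exploited in the error analysis.
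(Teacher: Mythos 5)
Your proof is correct and follows essentially the same argument as the paper: with $v_0=v_b=\phi$ the boundary term $\langle Q_b(\phi-\phi),\bq\cdot\bn\rangle_{\partial T}$ in \eqref{d-d} vanishes, so $(\nabla_w\phi,\bq)_T=(\nabla\phi,\bq)_T=(\Q_h\nabla\phi,\bq)_T$ for all $\bq\in[P_j(T)]^d$, forcing $\nabla_w\phi=\Q_h\nabla\phi$ on each $T$. The extra step of testing with the difference $\nabla_w\phi-\Q_h\nabla\phi$ just makes explicit what the paper leaves implicit.
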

\begin{proof}
The definition of weak gradient (\ref{d-d}) implies that for
any $\bq\in [P_{j}(T)]^d$
\begin{eqnarray*}
(\nabla_w \phi,\bq)_T &=& (\nabla\phi,\bq)_T
+\langle Q_b(\phi-\phi),\bq\cdot\bn\rangle_{\pT}\\
&=& (\Q_h\nabla\phi,\bq)_T,
\end{eqnarray*}
which implies (\ref{key1}).
\end{proof}

For any function $\varphi\in H^1(T)$, the following trace
inequality holds true (see \cite{wymix} for details):
\begin{equation}\label{trace}
\|\varphi\|_{e}^2 \leq C \left( h_T^{-1} \|\varphi\|_T^2 + h_T
\|\nabla \varphi\|_{T}^2\right).
\end{equation}

\section{Well Posedness}

For any $v\in V_h \cup H^1(\Omega)$, define two semi-norms
\begin{eqnarray}
\3bar v\3bar^2_1&=&(\nabla_wv,\nabla_wv)_{\T_h},\label{3barnorm}\\
\3bar v\3bar^2&=&(a\nabla_wv,\nabla_wv)_{\T_h}.\label{3barnorm-1}
\end{eqnarray}
It follows from (\ref{ellipticity}) that there exist two positive constants $\alpha$ and $\beta$ such that
\begin{eqnarray}
\alpha\3bar v\3bar\le \3bar v\3bar_1\le \beta\3bar v\3bar.\label{norm-equ}
\end{eqnarray}

We introduce a discrete $H^1$ semi-norm as follows:
\begin{equation}\label{norm}
\|v\|_{1,h} = \left( \sum_{T\in\T_h}\left(\|\nabla
v_0\|_T^2+h_T^{-1} \|Q_b (v_0-v_b)\|^2_\pT\right) \right)^{\frac12}.
\end{equation}
It is easy to see that $\|v\|_{1,h}$ defines a norm in $V_h^0$.

Next we will show that $\3bar  \cdot \3bar$ also defines a norm for $V_h^0$ by proving
the equivalence of $\3bar\cdot\3bar$ and $\|\cdot\|_{1,h}$ in $V_h$. First we need the following lemma.

\medskip

\begin{lemma}\label{zhang} (\cite{yzcdg2})
Let $T$ be a convex, shape-regular $n$-polygon/polyhedron of size $h_T$.
Let $e\in\pT$ be an edge/face-polygon of $T$, of size $Ch_T$. Let $v_h\in V_h$ and
  $v_h=\{v_0,v_b\}$ on $T$.  Then there exists a polynomial $\bq\in [P_{j}(T)]^d$, $j=n+k-1$,
   such that
\an{ \label{q1} -(\nabla v_0,\b q)_T &= 0, \\
  \label{q2}
   \l Q_b(v_0-v_b), \b q\cdot \b n\r_e & = \|Q_b(v_0-v_b)\|_e^2\quad \forall e\subset \pT, \\
   \|\b q\|_T^2 & \le  C  h_T \|Q_b(v_0-v_b)\|_e ^2. \label{q3} }
\end{lemma}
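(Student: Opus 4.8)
The goal is to construct, on a single element $T$, a vector polynomial $\bq \in [P_j(T)]^d$ with $j = n+k-1$ that is orthogonal to $\nabla v_0$ on $T$, equals the scalar $\|Q_b(v_0-v_b)\|_e^2$-producing test function edge-by-edge against each face, and has the right size bound. I would proceed by a dimension-counting / linear algebra argument on the reference-type element, localized face by face.

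\medskip

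\textbf{Proof plan.} First I would set up the right linear system. On each edge $e \subset \pT$, the quantity $Q_b(v_0 - v_b)|_e$ is a polynomial in $P_{k-1}(e)$; call it $\phi_e$. I want $\bq\cdot\bn|_e$ to "act like" $\phi_e$ in the sense of \eqref{q2}, i.e. $\langle \phi_e, \bq\cdot\bn\rangle_e = \|\phi_e\|_e^2$, and simultaneously \eqref{q1} should hold. The natural strategy is to build $\bq$ as a sum of face-supported pieces: for each face $e$, construct $\bq_e \in [P_j(T)]^d$ whose normal trace on $e$ is (a multiple of) $\phi_e$ and whose normal trace on every other face $e' \neq e$ vanishes, while also being $L^2(T)$-orthogonal to the gradient space $\nabla P_k(T)$. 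Then $\bq = \sum_e \bq_e$ will satisfy \eqref{q2} on each face by the vanishing-on-other-faces property, and \eqref{q1} by linearity.

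\medskip

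The heart of the matter is therefore the following per-face construction: given a face $e$ and a target trace $\phi_e \in P_{k-1}(e)$, find $\bq_e \in [P_j(T)]^d$ with (i) $\bq_e\cdot\bn = 0$ on $\pT \setminus e$, (ii) $\langle \bq_e\cdot\bn, \mu\rangle_e = c\,\langle \phi_e,\mu\rangle_e$ for all $\mu \in P_{k-1}(e)$ with $c>0$ fixed, and (iii) $(\bq_e, \nabla w)_T = 0$ for all $w \in P_k(T)$. A clean way: let $b_e = \prod_{e'\neq e} \ell_{e'}$ be the product of the $n-1$ linear functions vanishing on the other faces (so $b_e \in P_{n-1}(T)$, $b_e > 0$ on the interior, $b_e = 0$ on $\pT\setminus e$); then $\bq_e := b_e\,\bn_e\,\psi_e + (\text{correction})$ with $\psi_e$ a polynomial extension of $\phi_e$ of degree $\le k-1$. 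The leading term $b_e \bn_e \psi_e$ has degree $\le n-1 + k - 1 = j$, vanishes in normal trace on all other faces automatically, and on $e$ itself its normal trace is $b_e|_e \cdot \phi_e$ which (since $b_e|_e$ is a fixed positive polynomial) pairs positively with $\phi_e$ — one may either absorb the $b_e|_e$ weight into the definition of the norm up to equivalence of norms on the finite-dimensional space $P_{k-1}(e)$, or else precompose $\phi_e$ with multiplication by $1/b_e|_e$ approximated by its $L^2$-projection. The remaining issue is enforcing orthogonality (iii): the space $\{\bq \in [P_j(T)]^d : \bq\cdot\bn|_{\pT} = 0\}$ (the "bubble" part of $[P_j(T)]^d$) must be large enough that one can add an element of it to $b_e\bn_e\psi_e$ to kill the $\le \dim P_k(T)$ linear constraints coming from $(\cdot,\nabla w)_T$; this is where $j = n+k-1$ (rather than something smaller) is used, and it is precisely the reason the polynomial degree must grow with the number of faces $n$. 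I would verify this surjectivity by a dimension count: $\dim[P_j(T)]^d$ grows like $d\binom{j+d}{d}$, while the constraints number $\dim\nabla P_k(T) \le \dim P_k(T) - 1$ plus the $n\dim P_{k-1}(e)$ trace conditions, and for $j = n+k-1$ the interior-bubble space comfortably dominates; the quantitative version of this (with constants depending only on shape-regularity) follows by a scaling/compactness argument on a reference configuration.

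\medskip

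\textbf{Size bound and the main obstacle.} Once $\bq_e$ is constructed with $\langle \bq_e\cdot\bn,\mu\rangle_e = \langle\phi_e,\mu\rangle_e$ (after the normalization above), estimate $\|\bq_e\|_T$: by a scaling argument to a reference element of unit size, all norms on the finite-dimensional space are equivalent, giving $\|\hat\bq_e\|_{\hat T} \le C\|\hat\phi_e\|_{\hat e}$, and scaling back inserts the factor $h_T$ in \eqref{q3} (the normal-trace normalization on an edge of size $Ch_T$ forces $\bq_e$ to scale so that $\|\bq_e\|_T^2 \sim h_T\|\phi_e\|_e^2$). Summing over the $n$ faces and using shape-regularity (so $n$ and all hidden constants are controlled) yields \eqref{q3}. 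I expect the main obstacle to be making the orthogonality-correction step rigorous with constants independent of $T$: one must show that the linear map sending a bubble $\br \in [P_j(T)]^d$ with zero normal trace to the vector $\big((\br,\nabla w)_T\big)_{w}$ is surjective onto the space of constraints, \emph{uniformly} over shape-regular elements, so that the correction exists and is bounded by $C\|b_e\bn_e\psi_e\|_T$. This is a finite-dimensional statement that can be reduced to finitely many reference shapes (or handled by a normal-family/compactness argument), but it is the only non-routine point; everything else is bookkeeping with trace inequalities and scaling. Since the lemma is cited from \cite{yzcdg2}, the author's proof likely either references that construction or reproduces exactly this dimension-count-plus-scaling argument.
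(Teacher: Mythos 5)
Your overall construction is sound and is the same kind of edge-bubble argument that underlies the cited source \cite{yzcdg2} (note the paper under review does not prove the lemma at all; it only quotes it, so your proof has to stand on its own). The pieces that work as stated: for a convex $T$ the factor $b_e=\prod_{e'\ne e}\ell_{e'}$ is strictly positive on the interior of $e$ and vanishes on every other face, so choosing $\psi_e\in P_{k-1}(e)$ by the $b_e$-weighted projection, $\langle b_e\psi_e,\mu\rangle_e=\langle Q_b(v_0-v_b),\mu\rangle_e$ for all $\mu\in P_{k-1}(e)$, gives (\ref{q2}) face by face with $\|\psi_e\|_e\le C\|Q_b(v_0-v_b)\|_e$ uniformly over shape-regular elements, and the scaling you describe does yield (\ref{q3}).

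The step you have not actually justified is the existence of the orthogonality correction, and it is the crux of the lemma: a dimension count comparing $\dim [P_j(T)]^d$ with the number of constraints does not show that the map $\mathbf{r}\mapsto\big((\mathbf{r},\nabla w)_T\big)_{w\in P_k(T)}$, restricted to the zero-normal-trace subspace, is onto; largeness of the source space never implies surjectivity of a particular linear map. The repair is immediate and also explains exactly why $j=n+k-1$: let $b_T=\prod_{m=1}^{n}\ell_m$ be the element bubble (degree $n$, zero on all of $\pT$, positive inside). Then $b_T\nabla P_k(T)\subset [P_{n+k-1}(T)]^d$ has vanishing normal trace, so adding such terms does not disturb (\ref{q2}), and $(b_T\nabla z,\nabla w)_T$ is a symmetric positive definite form on $\nabla P_k(T)$ (it equals $\int_T b_T|\nabla w|^2$ on the diagonal), which gives the needed surjectivity together with a bound on the correction by $C$ times the leading term, the constant being uniform by the usual scaling-plus-compactness argument over shape-regular convex $n$-gons. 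In fact (\ref{q1}) only asks for orthogonality to the single function $\nabla v_0$, so the rank-one correction $\bq\mapsto \bq + c\,b_T\nabla v_0$ with $c=-(\bq,\nabla v_0)_T/(b_T\nabla v_0,\nabla v_0)_T$ (no correction needed if $\nabla v_0=0$) already suffices and yields the bound $\|c\,b_T\nabla v_0\|_T\le C\|\bq\|_T$ directly; note this correction has degree $n+k-1$, which is precisely where the hypothesis $j=n+k-1$ is used. With that substitution for your dimension-count step, the argument is complete.
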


\begin{lemma} There exist two positive constants $C_1$ and $C_2$ such
that for any $v=\{v_0,v_b\}\in V_h$, we have
\begin{equation}\label{happy}
C_1 \|v\|_{1,h}\le \3bar v\3bar \leq C_2 \|v\|_{1,h}.
\end{equation}
\end{lemma}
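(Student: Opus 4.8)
The plan is to prove the two inequalities in \eqref{happy} separately, treating the weak gradient via its defining relation \eqref{d-d} and the norm $\3bar v\3bar$ via the norm equivalence \eqref{norm-equ}, so that it suffices to compare $\3bar v\3bar_1^2 = (\nabla_w v,\nabla_w v)_{\T_h}$ with $\|v\|_{1,h}^2 = \sum_T (\|\nabla v_0\|_T^2 + h_T^{-1}\|Q_b(v_0-v_b)\|_{\pT}^2)$. For the upper bound $\3bar v\3bar \le C_2\|v\|_{1,h}$, I would fix $T$, take $\bq = \nabla_w v|_T \in [P_j(T)]^d$ in \eqref{d-d}, and estimate the right-hand side: the volume term $(\nabla v_0,\bq)_T$ is bounded by $\|\nabla v_0\|_T\|\bq\|_T$ by Cauchy--Schwarz, and the boundary term $\langle Q_b(v_0-v_b),\bq\cdot\bn\rangle_{\pT}$ is bounded by $\|Q_b(v_0-v_b)\|_{\pT}\|\bq\|_{\pT}$, after which the trace inequality \eqref{trace} together with the inverse inequality for the polynomial $\bq$ gives $\|\bq\|_{\pT} \le C h_T^{-1/2}\|\bq\|_T$. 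Dividing through by $\|\bq\|_T = \|\nabla_w v\|_T$ and squaring and summing over $T$ yields $\3bar v\3bar_1 \le C\|v\|_{1,h}$, and then \eqref{norm-equ} finishes this direction.

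For the lower bound $C_1\|v\|_{1,h}\le \3bar v\3bar$, the idea is to bound each of the two pieces of $\|v\|_{1,h}^2$ by $\|\nabla_w v\|_T^2$. The term $\|\nabla v_0\|_T^2$ is the easy one: choosing $\bq = \nabla v_0 \in [P_j(T)]^d$ (legitimate since $j>k$) in \eqref{d-d} and noting $Q_b$ is the $L^2$-projection onto $P_{k-1}(e)$ while $\nabla v_0\cdot\bn|_e \in P_{k-1}(e)$... actually more carefully, one writes $(\nabla v_0,\nabla v_0)_T = (\nabla_w v,\nabla v_0)_T - \langle Q_b(v_0-v_b),\nabla v_0\cdot\bn\rangle_{\pT}$ and controls the boundary term using \eqref{trace} and an inverse estimate on $\nabla v_0$, producing $\|\nabla v_0\|_T \le C(\|\nabla_w v\|_T + h_T^{-1/2}\|Q_b(v_0-v_b)\|_{\pT})$. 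The term $h_T^{-1}\|Q_b(v_0-v_b)\|_{\pT}^2$ is where Lemma \ref{zhang} is essential: take the special polynomial $\bq$ guaranteed there, plug it into \eqref{d-d}, use \eqref{q1} to kill the volume term and \eqref{q2} to turn the boundary term into $\sum_{e\subset\pT}\|Q_b(v_0-v_b)\|_e^2 = \|Q_b(v_0-v_b)\|_{\pT}^2$, so that $\|Q_b(v_0-v_b)\|_{\pT}^2 = (\nabla_w v,\bq)_T \le \|\nabla_w v\|_T\|\bq\|_T \le C h_T^{1/2}\|\nabla_w v\|_T\|Q_b(v_0-v_b)\|_{\pT}$ by \eqref{q3}; cancelling one factor gives $h_T^{-1/2}\|Q_b(v_0-v_b)\|_{\pT} \le C\|\nabla_w v\|_T$. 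Combining the two pieces, summing over $T$, and invoking \eqref{norm-equ} once more gives $C_1\|v\|_{1,h}\le \3bar v\3bar$.

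The main obstacle is the lower bound for the $\|\nabla v_0\|_T$ contribution, because one cannot test \eqref{d-d} with $\nabla v_0$ and immediately read off a clean estimate — the boundary term $\langle Q_b(v_0-v_b),\nabla v_0\cdot\bn\rangle_{\pT}$ couples back to $\|\nabla v_0\|_{\pT}$, and one must use the trace and inverse inequalities carefully (and absorb, if needed, using a Young's inequality with a small parameter) to avoid circularity, while keeping track of the $h_T$ powers so the final bound is scale-invariant. Everything else — the Cauchy--Schwarz steps, the inverse inequalities on polynomial spaces on shape-regular elements, and the bookkeeping of summation over $\T_h$ — is routine, and the crucial structural input (that the boundary jump $\|Q_b(v_0-v_b)\|_{\pT}$ is itself controlled by $\|\nabla_w v\|_T$ with the right power of $h_T$) is exactly what Lemma \ref{zhang} was designed to supply.
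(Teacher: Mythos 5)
Your proposal is correct and follows essentially the same route as the paper: testing \eqref{d-d} with $\bq=\nabla_w v|_T$ plus the trace and inverse inequalities for the upper bound, and, for the lower bound, first controlling $h_T^{-1/2}\|Q_b(v_0-v_b)\|_{\pT}$ by $\|\nabla_w v\|_T$ via the special $\bq$ of Lemma \ref{zhang} and then testing with $\bq=\nabla v_0$ to bound $\|\nabla v_0\|_T$. The Young's-inequality absorption you mention as a possible fallback is not even needed, since the jump estimate is established first and then inserted directly, exactly as in the paper's argument.
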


\medskip

\begin{proof}
For any $v=\{v_0,v_b\}\in V_h$, it follows from the definition of
weak gradient (\ref{d-d}) and integration by parts that on each $T\in\T_h$
\begin{eqnarray}\label{n-1}
(\nabla_wv,\bq)_T=(\nabla v_0,\bq)_T+\l Q_b(v_b-Qv_0),
\bq\cdot\bn\r_\pT\quad \forall \bq\in [P_{j}(T)]^d.
\end{eqnarray}
By letting $\bq=\nabla_w v|_T$ in (\ref{n-1}) we arrive at
\begin{eqnarray*}
(\nabla_wv,\nabla_w v)_T=(\nabla v_0,\nabla_w v)_T+\l Q_b(v_b-v_0),
\nabla_w v\cdot\bn\r_\pT.
\end{eqnarray*}
Letting $\bq=\nabla v_0|_T$ in (\ref{n-1}) implies
\begin{eqnarray}
(\nabla_wv,\nabla v_0)_T=(\nabla v_0,\nabla v_0)_T+\l Q_b(v_b-v_0),
\nabla v_0\cdot\bn\r_\pT.\label{n40}
\end{eqnarray}
From the trace inequality (\ref{trace}) and the inverse inequality
we have
\begin{eqnarray*}
\|\nabla_wv\|^2_T &\le& \|\nabla v_0\|_T \|\nabla_w v\|_T+ \|
Q_b(v_0-v_b)\|_\pT \|\nabla_w v\|_\pT\\
&\le& \|\nabla v_0\|_T \|\nabla_w v\|_T+ Ch_T^{-1/2}\|
Q_b(v_0-v_b)\|_\pT \|\nabla_w v\|_T,
\end{eqnarray*}
which implies
$$
\|\nabla_w v\|_T \le C \left(\|\nabla v_0\|_T +h_T^{-1/2}\|Q_b(v_0-v_b)\|_\pT\right),
$$
and consequently
$$\3bar v\3bar \leq C_2 \|v\|_{1,h}.$$

Next we will prove $C_1 \|v\|_{1,h}\le \3bar v\3bar $. First we need to prove
\begin{equation}\label{nnn1}
h_e^{-1/2}\|Q_b(v_0-v_b)\|_e\le C\|\nabla_w v\|_T.
\end{equation}
For $e\in\E_h$ and $T\in\T_h$ with $e\subset \pT$,
it has been proved in Lemma \ref{zhang} that there exists $\bq_0\in [P_{j}(T)]^d$ such that
\begin{equation}\label{2e}
(\nabla v_0,\bq_0)_T=0, \
    \ \ \l Q_b(v_b-v_0), \bq_0\cdot\bn\r_\pT =\|Q_b(v_0-v_b)\|_{\pT}^2,
\end{equation}
and
\begin{equation}
\|\bq_0\|_T \le C h_T^{1/2} \| Q_b(v_b-v_0)\|_\pT.\label{22e}
\end{equation}
Substituting  $\bq_0$ into (\ref{n-1}), we get
\begin{equation}\label{n3}
(\nabla_wv,\bq_0)_T=\|Q_b(v_b-v_0)\|^2_\pT.
\end{equation}
It follows from Cauchy-Schwarz inequality and (\ref{22e}) that
\[
\|Q_b(v_b-v_0)\|^2_\pT\le C\|\nabla_w v\|_T\|\bq_0\|_T
 \le Ch_T^{1/2}\|\nabla_w v\|_T\|Q_b(v_0-v_b)\|_\pT,
\]
which implies
\begin{equation}\label{n4}
h_T^{-1/2}\|Q_b(v_0-v_b)\|_\pT\le C\|\nabla_w v\|_T.
\end{equation}
It follows from (\ref{n40}),  the trace inequality, the inverse inequality and (\ref{n4}),
\a{
\|\nabla v_0\|_T^2 & \leq \|\nabla_w v\|_T \|\nabla v_0\|_T
+Ch_T^{-1/2}\| Q_b(v_0-v_b)\|_\pT \|\nabla v_0\|_T
  \\ & \le C\|\nabla_w v\|_T \|\nabla v_0\|_T,
}
which implies
$$
\|\nabla v_0\|_T \leq C\|\nabla_w v\|_T.
$$
Combining the above estimate and (\ref{n4}),
 we prove  the lower bound of (\ref{happy}) and complete the proof of the lemma.
\end{proof}

\medskip

\begin{lemma}
The weak Galerkin finite element scheme (\ref{wg}) has a unique
solution.
\end{lemma}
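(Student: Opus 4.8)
The plan is to use the fact that (\ref{wg}) is a square linear system over the finite-dimensional space $V_h$, so existence and uniqueness of the solution both follow once we show that the homogeneous problem has only the trivial solution. Concretely, I would set $f=0$ and $g=0$, so that $u_b=Q_bg=0$ on $\partial\Omega$ and hence $u_h\in V_h^0$, and then take $v=u_h$ in (\ref{wg}) to obtain $\3bar u_h\3bar^2=(a\nabla_wu_h,\nabla_wu_h)_{\T_h}=0$.

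Next I would invoke the norm equivalence (\ref{happy}) established in the preceding lemma, which gives $\|u_h\|_{1,h}=0$; by the definition (\ref{norm}) of the discrete $H^1$ semi-norm this forces $\nabla u_0=0$ on every $T\in\T_h$ and $Q_b(u_0-u_b)=0$ on every $\partial T$. Since each $T$ is connected and $u_0|_T\in P_k(T)$, the first condition makes $u_0|_T$ a constant $c_T$. Then on an edge $e\subset\partial T$ we have $Q_bu_0|_e=c_T$ (a constant lies in $P_{k-1}(e)$ since $k\ge 1$, hence is unchanged by $Q_b$) and $Q_bu_b|_e=u_b|_e$, so $Q_b(u_0-u_b)|_e=0$ yields $u_b|_e=c_T$.

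Finally I would propagate these constants across the mesh: on any interior edge $e=\partial T_1\cap\partial T_2$ the single-valuedness of $u_b$ gives $c_{T_1}=c_{T_2}$, so by connectedness of $\Omega$ all the $c_T$ equal a common constant $c$; on a boundary edge $u_b=0$, which forces $c=0$. Therefore $u_0\equiv0$ and $u_b\equiv0$, i.e. $u_h=0$, and the square-system argument then delivers both existence and uniqueness.

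The step that requires the most care is the passage from $Q_b(u_0-u_b)=0$ to $u_b|_e=u_0|_e$: because $Q_b$ projects onto $P_{k-1}(e)$ it has a nontrivial kernel, so this implication is false for a general $u_0$ and genuinely relies on $\nabla u_0=0$ having already reduced $u_0|_e$ to a constant, which does belong to $P_{k-1}(e)$. Everything else is routine linear algebra together with the already-proven equivalence of $\3bar\cdot\3bar$ and $\|\cdot\|_{1,h}$ on $V_h^0$.
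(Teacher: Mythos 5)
Your argument is correct and follows essentially the same route as the paper: test the homogeneous problem with $v=u_h$ (the paper tests with the difference of two solutions), conclude $\3bar u_h\3bar=0$, and invoke the equivalence (\ref{happy}) to get $\|u_h\|_{1,h}=0$. The only difference is that you spell out why $\|\cdot\|_{1,h}$ vanishing forces $u_h=0$ in $V_h^0$ (piecewise constants glued by the single-valued $u_b$ and the boundary condition), a point the paper dismisses as ``easy to see,'' and you make explicit the square-system argument giving existence from uniqueness.
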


\smallskip

\begin{proof}
If $u_h^{(1)}$ and $u_h^{(2)}$ are two solutions of (\ref{wg}), then
$\varepsilon_h=u_h^{(1)}-u_h^{(2)}\in V_h^0$ would satisfy the following equation
$$
(a\nabla_w \varepsilon_h,\nabla_w v)_{\T_h}=0,\qquad\forall v\in V_h^0.
$$
 Then by letting $v=\varepsilon_h$ in the above
equation and (\ref{norm-equ}), we arrive at
$$
\3bar \varepsilon_h\3bar^2 =(a\nabla_w \varepsilon_h,\nabla_w \varepsilon_h)=0.
$$
It follows from (\ref{happy}) that $\|\varepsilon_h\|_{1,h}=0$. Since $\|\cdot\|_{1,h}$ is a norm in $V_h^0$, one has $\varepsilon_h=0$.
 This completes the proof of the lemma.
\end{proof}

\section{Error Analysis}

The goal of this section is to establish  error estimates for
the weak Galerkin finite element solution $u_h$ arising from (\ref{wg}).
For simplicity of analysis, we assume that the coefficient tensor
$a$ in (\ref{pde}) is a piecewise constant matrix with respect to
the finite element partition $\T_h$. The result can be extended to
variable tensors without any difficulty, provided that the tensor
$a$ is piecewise sufficiently smooth.

\subsection{Error Equation}

Let $e_h=u-u_h$ and $\epsilon_h=Q_hu-u_h\in V_h$. In this section, we derive an error equation that $e_h$ satisfies.

\begin{lemma}
For any $v\in V_h^0$, the following error equation holds true
\begin{eqnarray}
(a\nabla_we_h,\nabla_wv)_{\T_h}=e_1(u,v)+e_2(u,v),\label{ee}
\end{eqnarray}
where
\begin{eqnarray*}
e_1(u,v)&=& \langle a(\nabla u-\Q_h\nabla u)\cdot\bn,\;Q_bv_0-v_b\rangle_{\partial\T_h},\\
e_2(u,v)&=& \l a\nabla u \cdot\bn, v_0-Q_bv_0\rangle_{\partial\T_h}.
\end{eqnarray*}
\end{lemma}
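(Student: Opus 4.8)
The plan is to derive the error equation directly from the definition of the weak gradient and the PDE, exactly as in the standard WG error analysis but taking care of the new term $\langle Q_b(v_b - v_0), \bq\cdot\bn\rangle_{\partial T}$ in \eqref{d-d}. First I would start from the left-hand side $(a\nabla_w e_h, \nabla_w v)_{\T_h} = (a\nabla_w Q_h u, \nabla_w v)_{\T_h} - (a\nabla_w u_h, \nabla_w v)_{\T_h}$, and use the scheme \eqref{wg} to replace the second term by $(f, v_0)$. For the first term, I would take $\bq = a\nabla_w v|_T \in [P_j(T)]^d$ (using that $a$ is piecewise constant, so $a\bq$ is still a legitimate test function) in the defining identity \eqref{d-d} applied to $Q_h u = \{Q_0 u, Q_b u\}$, obtaining
\[
(a\nabla_w Q_h u, \nabla_w v)_{\T_h} = (\nabla Q_0 u, a\nabla_w v)_{\T_h} + \langle Q_b(Q_b u - Q_0 u), a\nabla_w v\cdot\bn\rangle_{\partial\T_h}.
\]

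Next I would symmetrically expand $(\nabla Q_0 u, a\nabla_w v)_{\T_h}$ by invoking \eqref{d-d} for $v$ with test function $\bq = a\nabla Q_0 u|_T$ (which lies in $[P_j(T)]^d$ since $j > k > k-1 \ge \deg(\nabla Q_0 u)$), giving $(\nabla Q_0 u, a\nabla_w v)_T = (a\nabla Q_0 u, \nabla_w v)_T = (a\nabla Q_0 u, \nabla v_0)_T + \langle Q_b(v_b - v_0), a\nabla Q_0 u\cdot\bn\rangle_{\partial T}$. Then I would integrate by parts on $(a\nabla Q_0 u, \nabla v_0)_T = -(\nabla\cdot(a\nabla Q_0 u), v_0)_T + \langle a\nabla Q_0 u\cdot\bn, v_0\rangle_{\partial T}$ and compare with the PDE $-\nabla\cdot(a\nabla u) = f$. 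The routine step is to replace $\nabla Q_0 u$ and $Q_b Q_0 u$ by $\nabla u$ and $Q_b u$ up to projection errors, using $Q_h\nabla\phi = \nabla_w\phi$ from \eqref{key1}, the fact that $\langle \bq\cdot\bn, w\rangle$ against polynomial test functions sees only $Q_b w$, and that $\langle a\Q_h\nabla u\cdot\bn, \cdot\rangle = \langle a\nabla u\cdot\bn, Q_b(\cdot)\rangle$ up to the commuting projections; the single-valuedness of $v_b$ across interior edges and $a\nabla u\cdot\bn$ continuity kill the fully-conforming contributions, leaving precisely the two boundary terms $e_1(u,v)$ (the consistency error of $\Q_h\nabla u$) and $e_2(u,v)$ (the gap between $v_0$ and its projection $Q_b v_0$ on $\partial\T_h$).

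The main obstacle is bookkeeping: tracking which terms survive on $\partial\T_h$ after summation over elements. The delicate point is that the new weak gradient uses $Q_b(v_b - v_0)$ rather than $v_b$ alone, so one must be careful that the interior-edge cancellation of $\langle a\nabla u\cdot\bn, v_b\rangle$ still works — it does, because $v_b$ is single-valued and $a\nabla u\cdot\bn$ is continuous, and because the remaining $v_0$-terms reassemble into $\langle a\nabla u\cdot\bn, v_0 - Q_b v_0\rangle_{\partial\T_h}$ once we note $\langle a\Q_h\nabla u\cdot\bn, Q_b v_0 - v_b\rangle$ is exactly $e_1$ after adding and subtracting $\langle a\nabla u\cdot\bn, Q_b v_0 - v_b\rangle$ and using the continuity to discard the $v_b$ piece. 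I expect the identity to close cleanly after carefully combining these pieces; no estimates are needed here, only exact algebra together with \eqref{key1}, integration by parts, and the definition \eqref{d-d}.
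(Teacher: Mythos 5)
Your overall strategy (expand through the definition (\ref{d-d}), integrate by parts, test the PDE with $v_0$, and use the single-valuedness of $v_b$ to kill the conforming flux terms) is the right kind of argument, but there is a genuine error at the very first step. In this paper $e_h=u-u_h$, not $Q_hu-u_h$, so the splitting $(a\nabla_we_h,\nabla_wv)_{\T_h}=(a\nabla_wQ_hu,\nabla_wv)_{\T_h}-(a\nabla_wu_h,\nabla_wv)_{\T_h}$ is false: by (\ref{key1}) one has $\nabla_wu=\Q_h\nabla u$, whereas $\nabla_wQ_hu$ is built from $\nabla Q_0u$ and $Q_b(Q_bu-Q_0u)$, and the two weak gradients do not coincide. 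The discrepancy $(a\nabla_w(Q_hu-u),\nabla_wv)_{\T_h}$ is genuinely nonzero --- this is precisely why the paper proves $\3bar u-Q_hu\3bar\le Ch^k|u|_{k+1}$ in (\ref{eee2}) and carries $(a\nabla_w(Q_hu-u),\nabla_w\Phi)_{\T_h}$ as a separate term in the $L^2$ duality argument --- so it cannot be absorbed into $e_1+e_2$ by exact algebra. For the same reason, your later step ``replace $\nabla Q_0u$ and $Q_bQ_0u$ by $\nabla u$ and $Q_bu$ up to projection errors'' is incompatible with the fact that (\ref{ee}) is an exact identity: those projection errors survive and would leave residual terms beyond $e_1(u,v)+e_2(u,v)$. (Even if you reinterpret your computation as an identity for $\epsilon_h=Q_hu-u_h$, the resulting equation would not be (\ref{ee}) but would contain the extra term $(a\nabla_w(Q_hu-u),\nabla_wv)_{\T_h}$.)

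The fix is to never introduce $Q_hu$ at all and to work with $u$ itself, which is what the paper does. Test (\ref{pde}) with $v_0$ and use $\langle a\nabla u\cdot\bn,v_b\rangle_{\partial\T_h}=0$ (single-valued $v_b$, $v_b=0$ on $\partial\Omega$, continuity of the normal flux) to get $(a\nabla u,\nabla v_0)_{\T_h}-\langle a\nabla u\cdot\bn,v_0-v_b\rangle_{\partial\T_h}=(f,v_0)$; splitting $v_0-v_b=(Q_bv_0-v_b)+(v_0-Q_bv_0)$ produces $e_2(u,v)$. Then, since $a$ is piecewise constant, $(a\nabla u,\nabla v_0)_T=(a\Q_h\nabla u,\nabla v_0)_T$, and choosing $\bq=a\Q_h\nabla u\in[P_j(T)]^d$ in (\ref{d-d}) gives exactly $(a\Q_h\nabla u,\nabla v_0)_T=(a\nabla_wu,\nabla_wv)_T+\langle Q_bv_0-v_b,\,a\Q_h\nabla u\cdot\bn\rangle_{\pT}$, using (\ref{key1}) and $Q_b(v_0-v_b)=Q_bv_0-v_b$. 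Combining the two identities yields $(a\nabla_wu,\nabla_wv)_{\T_h}=(f,v_0)+e_1(u,v)+e_2(u,v)$, and subtracting the scheme (\ref{wg}) gives (\ref{ee}). Your outline only matches this proof after $Q_hu$, $Q_0u$ and $Q_bu$ are removed from the picture; as written, it does not establish the stated identity.
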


\begin{proof}
For $v=\{v_0,v_b\}\in V_h^0$, testing (\ref{pde}) by  $v_0$  and using the fact that
$\langle a\nabla u\cdot\bn, v_b\rangle_{\partial\T_h}=0$,  we arrive at
\begin{equation}\label{m0}
(a\nabla u,\nabla v_0)_{\T_h}- \langle
a\nabla u\cdot\bn,v_0-v_b\rangle_{\partial\T_h}=(f,v_0).
\end{equation}
Obviously, we have
\begin{eqnarray}
\langle
a\nabla u\cdot\bn,v_0-v_b\rangle_{\partial\T_h}&=&\langle a\nabla u\cdot\bn, Q_bv_0-v_b\rangle_{\partial\T_h}
+\langle a\nabla u\cdot\bn,v_0-Q_bv_0\rangle_{\partial\T_h}.\label{j0}
\end{eqnarray}
Combining (\ref{m0}) and (\ref{j0}) gives
\begin{equation}\label{m1}
(a\nabla u,\nabla v_0)_{\T_h}- \langle
a\nabla u\cdot\bn,Q_bv_0-v_b\rangle_{\partial\T_h}=(f,v_0)+e_2(u,v).
\end{equation}

It follows from (\ref{d-d}) and (\ref{key1})  that
\begin{eqnarray}
(a\nabla u,\nabla v_0)_{\T_h}&=&(a\Q_h\nabla  u,\nabla v_0)_{\T_h}\nonumber\\
&=&(a\Q_h\nabla u, \nabla_w v)_{\T_h}+\langle Q_bv_0-v_b,a\Q_h\nabla u\cdot\bn\rangle_{\partial\T_h}\nonumber\\
&=&(a\nabla_w u, \nabla_w v)_{\T_h}+\langle Q_bv_0-v_b,a\Q_h\nabla u\cdot\bn\rangle_{\partial\T_h}.\label{j1}
\end{eqnarray}
Using (\ref{m1}) and (\ref{j1}), we have
\begin{eqnarray}
(a\nabla_w u,\nabla_w v)_{\T_h}
&=&(f,v_0)+e_1(u,v)+e_2(u,v).\label{j2}
\end{eqnarray}
Subtracting (\ref{wg}) from (\ref{j2}) yields,
\begin{eqnarray*}
(a\nabla_we_h,\nabla_wv)_{\T_h}=e_1(u,v)+e_2(u,v)\quad \forall v\in V_h^0.
\end{eqnarray*}
This completes the proof of the lemma.
\end{proof}

\subsection{Error Estimates in Energy Norm}

Optimal convergence rate of the WG approximation in energy norm will be obtained in this section.
First we will bound the two terms $e_1(u,v)$ and $e_2(u,v)$ in the error equation (\ref{ee}).

\begin{lemma} For any $w\in H^{k+1}(\Omega)$ and
$v=\{v_0,v_b\}\in V_h^0$, we have
\begin{eqnarray}
|e_1(w, v)|&\le&Ch^{k}|w|_{k+1}\3bar v\3bar,\label{mmm1}\\
|e_2(w, v)|&\le&Ch^{k}|w|_{k+1}\3bar v\3bar.\label{mmm4}
\end{eqnarray}
\end{lemma}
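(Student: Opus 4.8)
The plan is to bound $e_1(w,v)$ and $e_2(w,v)$ separately, each by first writing the boundary inner product as a sum over elements, then applying the Cauchy--Schwarz inequality on $\partial\T_h$ to split off the factor involving $v$, and finally invoking the norm equivalence $\|v\|_{1,h}\le C\3bar v\3bar$ from Lemma~\ref{happy} together with the discrete trace estimate $h_e^{-1/2}\|Q_b(v_0-v_b)\|_e \le C\|\nabla_w v\|_T$ (established as (\ref{n4}) inside that proof) to convert the $v$-part into $\3bar v\3bar$.

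For $e_1(w,v)$, the factor involving $v$ on each edge is $Q_bv_0-v_b$. Since $Q_b$ is the $L^2$-projection onto $P_{k-1}(e)$ and $v_b\in P_{k-1}(e)$, I would write $Q_bv_0-v_b = Q_b(v_0-v_b)$ so that the norm on $\partial\T_h$ reads $\sum_T h_T^{-1}\|Q_b(v_0-v_b)\|^2_{\pT}\cdot h_T$, which is controlled by $\3bar v\3bar^2$ via (\ref{n4}) and Lemma~\ref{happy}. For the $w$-part, $a(\nabla w - \Q_h\nabla w)\cdot\bn$ on $e$, I use the trace inequality (\ref{trace}) applied to the componentwise error $\nabla w - \Q_h\nabla w$: this gives $\|\nabla w-\Q_h\nabla w\|_e^2 \le C(h_T^{-1}\|\nabla w-\Q_h\nabla w\|_T^2 + h_T|\nabla w-\Q_h\nabla w|_{1,T}^2)$, and the standard $L^2$-projection approximation property $\|\nabla w-\Q_h\nabla w\|_T \le Ch_T^{k}|w|_{k+1,T}$ (recall $\Q_h$ projects onto $[P_j(T)]^d$ with $j>k$, so it certainly reproduces $P_k$) together with $|\nabla w - \Q_h\nabla w|_{1,T}\le Ch_T^{k-1}|w|_{k+1,T}$ yields $\|\nabla w-\Q_h\nabla w\|_e \le C h_T^{k-1/2}|w|_{k+1,T}$. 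Multiplying the two edge-norm bounds, summing over $T$ using Cauchy--Schwarz in the summation index, and using boundedness of $a$, the powers of $h_T$ combine as $h_T^{k-1/2}\cdot h_T^{1/2} = h_T^{k}$, giving (\ref{mmm1}).

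For $e_2(w,v)$, the $v$-factor on each edge is $v_0-Q_bv_0$, the part of $v_0$ orthogonal to $P_{k-1}(e)$. Here the key trick is to replace $a\nabla w\cdot\bn$ by $a\nabla w\cdot\bn - \overline{a\nabla w\cdot\bn}$, where the bar denotes the $L^2$-projection onto $P_{k-1}(e)$ (or any convenient lower-order polynomial on $e$), since $\langle \overline{a\nabla w\cdot\bn}, v_0-Q_bv_0\rangle_e = 0$ by definition of $Q_b$. Then I estimate $\|a\nabla w\cdot\bn - \overline{a\nabla w\cdot\bn}\|_e \le Ch_T^{k-1/2}|w|_{k+1,T}$ by the trace inequality plus approximation on $e$ (losing one power because $\nabla w$ is only $H^k$), and I bound $\|v_0-Q_bv_0\|_e$ by noting $\|v_0-Q_bv_0\|_e\le \|v_0-Q_b v_b'\|_e$-type arguments reduce it to $\|Q_b(v_0-v_b)\|_e$ plus a term controlled by $h_T\|\nabla v_0\|_T$ via a trace/scaling estimate; either way it is bounded by $Ch_T^{1/2}\|v\|_{1,h,T}$ and hence by $Ch_T^{1/2}\3bar v\3bar$ on that element. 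Summing as before gives (\ref{mmm4}).

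The main obstacle I anticipate is the careful treatment of $e_2$: controlling $\|v_0-Q_bv_0\|_e$ in terms of the computable quantities $\|\nabla v_0\|_T$ and $h_T^{-1/2}\|Q_b(v_0-v_b)\|_{\pT}$ requires inserting $v_b$ and using that $Q_b$ is an $L^2$-projection, namely $v_0 - Q_bv_0 = (v_0-v_b) - Q_b(v_0-v_b)$, whose $L^2(e)$-norm is bounded by $\|v_0-v_b\|_e$; then a trace inequality on $v_0-v_b$ is awkward because $v_b$ is only edge-defined, so one instead bounds $\|v_0-Q_bv_0\|_e^2 \le \|v_0-v_b\|_e^2$ only after expanding via orthogonality as $\|v_0-v_b\|_e^2 = \|Q_b(v_0-v_b)\|_e^2 + \|(I-Q_b)v_0\|_e^2$, which is circular. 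The clean route is to keep $\|v_0-Q_bv_0\|_e$ and apply the trace inequality directly to $v_0$: since $(I-Q_b)$ annihilates $P_{k-1}$ on $e$, one gets $\|(I-Q_b)v_0\|_e = \|(I-Q_b)(v_0 - \bar p)\|_e \le \|v_0-\bar p\|_e$ for any $\bar p\in P_{k-1}(e)$, and choosing $\bar p$ to be a suitable polynomial (e.g. the trace of a mean value) is not helpful since $v_0$ is a full polynomial of degree $k$. I expect the resolution to be that $e_2$ actually pairs against $a\nabla w\cdot\bn$ which we project, so only the $w$-side loses accuracy, while $\|v_0-Q_bv_0\|_e\le Ch_T^{1/2}\|\nabla v_0\|_T$ holds by a standard bubble/scaling argument on shape-regular elements (the degree-$k$ polynomial $v_0$ minus its degree-$(k-1)$ edge projection, measured on $e$, is controlled by the element gradient); once this is accepted the rest is bookkeeping of $h$-powers.
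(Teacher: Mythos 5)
Your proposal is correct and follows essentially the same route as the paper: for $e_1$ you use the identity $Q_bv_0-v_b=Q_b(v_0-v_b)$, Cauchy--Schwarz, the trace inequality with the approximation property of $\Q_h$, and the norm equivalence (\ref{happy}), exactly as in the paper. For $e_2$ your insertion of the edge $L^2$-projection of $a\nabla w\cdot\bn$ onto $P_{k-1}(e)$ is just a cosmetic variant of the paper's insertion of the element-wise projection $\Q_{k-1}\nabla w$ (whose normal trace lies in $P_{k-1}(e)$ since $a$ is piecewise constant), and your bound $\|v_0-Q_bv_0\|_e\le Ch_T^{1/2}\|\nabla v_0\|_T$ (obtainable simply by subtracting a constant and using trace plus Poincar\'e, no bubble argument needed) matches the paper's $\|v_0-Q_bv_0\|_\pT\le Ch_T\|\nabla v_0\|_\pT$ combined with trace and inverse inequalities.
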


\medskip

\begin{proof}
Using the Cauchy-Schwarz inequality, the trace inequality (\ref{trace}), (\ref{ellipticity}) and (\ref{happy}), we have
\begin{eqnarray*}
|e_1(w,v)|&=&\left|\sum_{T\in\T_h}\langle a(\nabla w-\Q_h\nabla
w)\cdot\bn, Q_bv_0-v_b\rangle_\pT\right|\\
&\le & C \sum_{T\in\T_h}\|\nabla w-\Q_h\nabla w\|_{\pT}
\|Q_bv_0-v_b\|_\pT\nonumber\\
&\le & C \left(\sum_{T\in\T_h}h_T\|(\nabla w-\Q_h\nabla w)\|_{\pT}^2\right)^{\frac12}
\left(\sum_{T\in\T_h}h_T^{-1}\|Q_bv_0-v_b\|_\pT^2\right)^{\frac12}\\
&\le & Ch^{k}|w|_{k+1}\3bar v\3bar.
\end{eqnarray*}

Let $\Q_{k-1}$ be the element-wise defined $L^2$ projection onto $[P_{k-1}(T)]^d$ on each $T\in\T_h$.
Using the Cauchy-Schwarz inequality, the trace inequality (\ref{trace}), (\ref{ellipticity}) and the inverse inequality, we have
\begin{eqnarray*}
|e_2(w,v)|&=&\left|\sum_{T\in\T_h}\langle a\nabla w\cdot\bn, v_0-Q_bv_0\rangle_\pT\right|\\
&=&\left|\sum_{T\in\T_h}\langle a(\nabla w-\Q_{k-1}\nabla w)\cdot\bn, v_0-Q_bv_0\rangle_\pT\right|\\
&\le & C \sum_{T\in\T_h}\|\nabla w-\Q_{k-1}\nabla w\|_{\pT}
\|v_0-Q_bv_0\|_\pT\nonumber\\
&\le & C \left(\sum_{T\in\T_h}h_T\|(\nabla w-\Q_{k-1}\nabla w)\|_{\pT}^2\right)^{\frac12}
\left(\sum_{T\in\T_h}h_T^{-1}h_T^2\|\nabla v_0\|_\pT^2\right)^{\frac12}\\
&\le & Ch^{k}|w|_{k+1}\3bar v\3bar.
\end{eqnarray*}
We have proved the lemma.
\end{proof}

\begin{lemma}
Let $u\in H^{k+1}(\Omega)$, then
\begin{equation}\label{eee2}
\3baru-Q_hu\3bar\le Ch^k|u|_{k+1}.
\end{equation}
\end{lemma}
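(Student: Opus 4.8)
The plan is to estimate $\3bar u - Q_h u\3bar$ by relating the weak gradient of the error $u - Q_h u$ to the commutator between $\Q_h \nabla$ and $\nabla_w$, and then invoking standard approximation properties of the $L^2$-projections $Q_0$, $Q_b$, and $\Q_h$. First I would use the norm equivalence \eqref{happy} to reduce the claim to bounding $\|u - Q_h u\|_{1,h}$, i.e.\ to bounding $\|\nabla Q_0 u\|_T$ appropriately (note $\nabla u - \nabla Q_0 u$ is what appears, since the $v_0$ component of $Q_h u$ is $Q_0 u$) and $h_T^{-1/2}\|Q_b(Q_0 u - Q_b u)\|_{\pT}$ on each element. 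Alternatively, and perhaps more cleanly, I would apply \eqref{key1}: since $u \in H^1(\Omega)$ we have $\nabla_w u = \Q_h \nabla u$, so $\nabla_w(u - Q_h u) = \Q_h \nabla u - \nabla_w Q_h u$, and I would compute $\nabla_w Q_h u$ from the definition \eqref{d-d} applied to the discrete function $Q_h u = \{Q_0 u, Q_b u\}$.

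The key computation is the following: for $\bq \in [P_j(T)]^d$,
\begin{equation*}
(\nabla_w Q_h u, \bq)_T = (\nabla Q_0 u, \bq)_T + \l Q_b(Q_b u - Q_0 u), \bq\cdot\bn\r_{\pT}.
\end{equation*}
Using that $Q_0$ is the $L^2$-projection onto $P_k(T)$ and integrating by parts, $(\nabla Q_0 u, \bq)_T = -(Q_0 u, \nabla\cdot\bq)_T + \l Q_0 u, \bq\cdot\bn\r_{\pT}$; comparing with the same identity for $\nabla u$ in place of $\nabla Q_0 u$ (valid since $u \in H^1$), and using $(Q_0 u - u, \nabla\cdot\bq)_T = 0$ because $\nabla\cdot\bq \in P_{j-1}(T) \subset$ is not necessarily in $P_{k-1}(T)$ — here I must be careful: $\nabla\cdot\bq$ has degree up to $j-1 > k-1$, so that term does not vanish in general. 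Instead I would keep $\nabla Q_0 u$ directly. So I subtract: $\nabla_w u - \nabla_w Q_h u$ satisfies, for all $\bq \in [P_j(T)]^d$,
\begin{equation*}
(\nabla_w u - \nabla_w Q_h u, \bq)_T = (\nabla u - \nabla Q_0 u, \bq)_T + \l Q_b(u - Q_b u) - Q_b(u - Q_0 u), \bq\cdot\bn\r_{\pT},
\end{equation*}
where I have used that for $u \in H^1$, $\nabla_w u$ is defined with $v_0 = v_b = u$, so the boundary term is $\l Q_b(u-u), \bq\cdot\bn\r = 0$; wait, that gives $\nabla_w u$ via $(\nabla u, \bq)_T$ alone. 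Thus the boundary term above is just $-\l Q_b(Q_b u - Q_0 u), \bq\cdot\bn\r_{\pT} = -\l Q_b(Q_b u - u) + Q_b(u - Q_0 u), \bq\cdot\bn\r_{\pT}$. Then taking $\bq = \nabla_w(u - Q_h u)|_T$, applying Cauchy–Schwarz, the trace inequality \eqref{trace}, and the inverse inequality to pass from $\|\bq\|_{\pT}$ to $h_T^{-1/2}\|\bq\|_T$, I obtain
\begin{equation*}
\|\nabla_w(u - Q_h u)\|_T \le C\Big( \|\nabla u - \nabla Q_0 u\|_T + h_T^{-1/2}\|Q_b u - Q_0 u\|_{\pT} + h_T^{-1/2}\|Q_b(u - Q_b u)\|_{\pT}\Big).
\end{equation*}
The last term vanishes since $Q_b(u - Q_b u) = Q_b u - Q_b u = 0$.

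It then remains to invoke standard approximation estimates: $\|\nabla u - \nabla Q_0 u\|_T \le Ch_T^k|u|_{k+1,T}$ (approximation of the $L^2$-projection onto $P_k$ in the $H^1$-seminorm, using a Bramble–Hilbert/scaling argument), and for the boundary term, $h_T^{-1/2}\|Q_0 u - Q_b u\|_{\pT} \le h_T^{-1/2}\big(\|Q_0 u - u\|_{\pT} + \|u - Q_b u\|_{\pT}\big)$, each estimated via the trace inequality \eqref{trace} applied to $u - Q_0 u$ (respectively the approximation property of $Q_b$ on the edge, or again the trace inequality combined with $\|u - Q_0 u\|_T + h_T\|\nabla(u - Q_0 u)\|_T \le Ch_T^{k+1}|u|_{k+1,T}$), giving $\le Ch_T^k|u|_{k+1,T}$. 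Summing over $T \in \T_h$ and using the norm equivalence \eqref{happy} (or \eqref{norm-equ} together with the definition \eqref{3barnorm}--\eqref{3barnorm-1}) yields \eqref{eee2}. The main obstacle, and the point requiring the most care, is correctly tracking which projection acts where in the boundary terms of \eqref{d-d} — in particular recognizing that $Q_b(u - Q_b u) = 0$ and that the genuinely nontrivial edge term is $h_T^{-1/2}\|Q_0 u - Q_b u\|_{\pT}$, which is controlled by combining the trace inequality with the elementwise $L^2$- and $H^1$-approximation properties of $Q_0$; the interior term $\|\nabla(u - Q_0 u)\|_T$ is then routine.
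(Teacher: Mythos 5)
Your overall strategy is the paper's: write $\nabla_w(u-Q_hu)$ via the definition (\ref{d-d}), take $\bq=\nabla_w(u-Q_hu)|_T$, and use Cauchy--Schwarz, the trace inequality, the inverse inequality and the approximation properties of $Q_0$. But there is a genuine flaw in how you treat the edge term. The boundary contribution is $\l Q_b(Q_0u-Q_bu),\bq\cdot\bn\r_{\pT}=\l Q_b(Q_0u-u),\bq\cdot\bn\r_{\pT}$, and the whole point of the outer projection $Q_b$ in (\ref{d-d}) is that it must be kept: $\|Q_b(Q_0u-u)\|_{\pT}\le\|Q_0u-u\|_{\pT}$, and then trace plus the $P_k$-approximation property of $Q_0$ gives $h_T^{-1/2}\|Q_0u-u\|_{\pT}\le Ch_T^k|u|_{k+1,T}$, which is exactly the paper's proof. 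In your displayed estimate and in your closing paragraph you drop that outer $Q_b$ and declare the "genuinely nontrivial edge term" to be $h_T^{-1/2}\|Q_0u-Q_bu\|_{\pT}$, claiming it is $\le Ch_T^k|u|_{k+1,T}$. That claim is false: take $u\in P_k(\Omega)$ with a nonzero degree-$k$ part (e.g. $u=x^k$), so $|u|_{k+1}=0$ and $Q_0u=u$, yet $Q_0u-Q_bu=u-Q_bu\neq0$ on each edge since $Q_b$ projects only onto $P_{k-1}(e)$. (Note the lemma itself survives this example precisely because $Q_b(Q_0u-u)=0$ there.)

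Your proposed repair, $\|Q_0u-Q_bu\|_{\pT}\le\|Q_0u-u\|_{\pT}+\|u-Q_bu\|_{\pT}$, does not rescue the argument either: since $Q_b$ is the projection onto $P_{k-1}(e)$ (one degree lower than $P_k$), the best one can say is $\|u-Q_bu\|_e\le Ch^{k}|u|_{H^k(e)}$, and after the trace conversion $|u|_{H^k(e)}\lesssim h^{-1/2}|u|_{k,T}+h^{1/2}|u|_{k+1,T}$ this yields $h_T^{-1/2}\|u-Q_bu\|_{\pT}\lesssim h^{k-1}|u|_{k,T}+h^{k}|u|_{k+1,T}$, i.e. you lose one order and cannot reach (\ref{eee2}); also, contrary to what you suggest, $\|u-Q_bu\|_{\pT}$ cannot be bounded by the trace inequality applied to $u-Q_0u$. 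The fix is simply to never discard the outer $Q_b$: bound $h_T^{-1/2}\|Q_b(Q_0u-u)\|_{\pT}\le h_T^{-1/2}\|Q_0u-u\|_{\pT}\le Ch_T^k|u|_{k+1,T}$. With that correction (and noting that your alternative opening route through (\ref{happy}) is unnecessary, and in any case (\ref{happy}) is only proved for $v\in V_h$ while $u-Q_hu\notin V_h$), your argument coincides with the paper's.
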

\begin{proof}
It follows from (\ref{d-d}) and  (\ref{trace}),
\begin{eqnarray*}
|(\nabla_w(u-Q_hu), \bq)_{T}|&=&|(\nabla(u-Q_0u), \bq)_{T}+\l Q_b(Q_0u-Q_bu), \bq\cdot\bn\r_{\pT}|\\
&=&|(\nabla(u-Q_0u), \bq)_{T}+\l Q_b(Q_0u-u), \bq\cdot\bn\r_{\pT}|\\
&\le& \|\nabla (u-Q_0u)\|_T\|\bq\|_T+Ch^{-1/2}\|Q_0u-u\|_\pT\|\bq\|_T\\
&\le& Ch^k|u|_{k+1, T}\|\bq\|_T.
\end{eqnarray*}
Letting $\bq=\nabla_w(u-Q_hu)$ in the above equation and taking summation over $T$, we have
\[
\3baru-Q_hu\3bar\le Ch^k|u|_{k+1}.
\]
We have proved the lemma.
\end{proof}

\begin{theorem} Let $u_h\in V_h$ be the weak Galerkin finite element solution of (\ref{wg}). Assume the exact solution $u\in H^{k+1}(\Omega)$. Then,
there exists a constant $C$ such that
\begin{equation}\label{err1}
\3bar u-u_h\3bar \le Ch^{k}|u|_{k+1}.
\end{equation}
\end{theorem}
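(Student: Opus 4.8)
The plan is to use the standard energy-argument for weak Galerkin methods: bound $\3bar e_h\3bar$ by splitting $e_h = (u-Q_hu) + (Q_hu - u_h) = (u-Q_hu) + \epsilon_h$, control $\3bar u - Q_hu\3bar$ by the approximation result \eqref{eee2}, and control $\3bar\epsilon_h\3bar$ using the error equation \eqref{ee} together with the bounds on $e_1$ and $e_2$ from \eqref{mmm1} and \eqref{mmm4}. The triangle inequality then finishes the job.

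More concretely, I would first note that $\epsilon_h = Q_hu - u_h \in V_h^0$, since $u_b = Q_bg = Q_b u$ on $\partial\Omega$ forces the boundary part of $\epsilon_h$ to vanish. Next I would write $\nabla_w e_h = \nabla_w(u-Q_hu) + \nabla_w\epsilon_h$ and test the error equation \eqref{ee} with $v = \epsilon_h$. This gives
\begin{equation*}
(a\nabla_w e_h, \nabla_w\epsilon_h)_{\T_h} = e_1(u,\epsilon_h) + e_2(u,\epsilon_h).
\end{equation*}
Using the splitting of $\nabla_w e_h$, the left side equals $(a\nabla_w(u-Q_hu),\nabla_w\epsilon_h)_{\T_h} + (a\nabla_w\epsilon_h,\nabla_w\epsilon_h)_{\T_h}$, so that
\begin{equation*}
\3bar\epsilon_h\3bar^2 = (a\nabla_w\epsilon_h,\nabla_w\epsilon_h)_{\T_h} = -(a\nabla_w(u-Q_hu),\nabla_w\epsilon_h)_{\T_h} + e_1(u,\epsilon_h) + e_2(u,\epsilon_h).
\end{equation*}
Now I bound each term on the right: the first by Cauchy–Schwarz together with \eqref{ellipticity} and \eqref{eee2}, getting $\le \3bar u - Q_hu\3bar\,\3bar\epsilon_h\3bar \le Ch^k|u|_{k+1}\3bar\epsilon_h\3bar$; and the last two by \eqref{mmm1} and \eqref{mmm4} with $w = u$ and $v = \epsilon_h$, each $\le Ch^k|u|_{k+1}\3bar\epsilon_h\3bar$. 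Dividing through by $\3bar\epsilon_h\3bar$ yields $\3bar\epsilon_h\3bar \le Ch^k|u|_{k+1}$.

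Finally, the triangle inequality $\3bar u - u_h\3bar \le \3bar u - Q_hu\3bar + \3bar\epsilon_h\3bar$ combined with \eqref{eee2} and the just-obtained bound gives \eqref{err1}. There is no serious obstacle here: all the hard work — establishing the weak-gradient identity $\nabla_w\phi = \Q_h\nabla\phi$, the norm equivalence \eqref{happy}, the error equation \eqref{ee}, and the estimates on $e_1$, $e_2$, and $\3bar u - Q_hu\3bar$ — has already been done in the preceding lemmas. The only point requiring a moment's care is verifying that $\epsilon_h\in V_h^0$ so that the error equation \eqref{ee} applies with $v = \epsilon_h$, but this follows immediately from the boundary condition $u_b = Q_bg$ imposed on $u_h$ and the definition of $Q_h$. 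The argument is therefore essentially a bookkeeping assembly of the established pieces.
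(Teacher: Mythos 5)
Your proof is correct and follows essentially the same route as the paper: the same decomposition $e_h=(u-Q_hu)+\epsilon_h$, the same test function $v=\epsilon_h$ in the error equation \eqref{ee}, and the same ingredients \eqref{mmm1}, \eqref{mmm4}, \eqref{eee2}. The only (harmless) difference is organizational: you bound $\3bar \epsilon_h\3bar$ first and finish with the triangle inequality, whereas the paper estimates $\3bar e_h\3bar^2$ directly and absorbs terms via Young's inequality, obtaining the bound on $\3bar Q_hu-u_h\3bar$ as a corollary afterwards.
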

\begin{proof}
It is straightforward to obtain
\begin{eqnarray}
 \3bar e_h\3bar^2&=&(a\nabla_we_h, \nabla_we_h)_{\T_h}\label{eee1}\\
&=&(a(\nabla_wu-\nabla_wu_h),\nabla_we_h)_{\T_h}\nonumber\\
&=&(a(\nabla_wQ_hu-\nabla_wu_h),\nabla_we_h)_{\T_h}+(a(\nabla_wu-\nabla_wQ_hu),\nabla_we_h)_{\T_h}\nonumber\\
&=&(a\nabla_we_h,\nabla_w\epsilon_h)_{\T_h}+(a\nabla_w(u-Q_hu),\nabla_we_h)_{\T_h}.\nonumber
\end{eqnarray}
We will bound the two terms in (\ref{eee1}).
Letting $v=\epsilon_h\in V_h$ in (\ref{ee})  and using (\ref{mmm1}), (\ref{mmm4}) and (\ref{eee2}), we have
\begin{eqnarray}
|(a\nabla_we_h,\nabla_w\epsilon_h)_{\T_h}|&=&|e_1(u,\epsilon_h)+e_2(u,\epsilon_h)|\nonumber\\
&\le& Ch^{k}|u|_{k+1}\3bar \epsilon_h\3bar\nonumber\\
&\le& Ch^{k}|u|_{k+1}\3bar Q_hu-u_h\3bar\nonumber\\
&\le& Ch^{k}|u|_{k+1}(\3bar Q_hu-u\3bar+\3bar u-u_h\3bar)\nonumber\\
&\le& Ch^{2k}|u|^2_{k+1}+\frac14 \3bare_h\3bar^2.\label{eee3}
\end{eqnarray}
The estimate (\ref{eee2}) implies
\begin{eqnarray}
|(\nabla_w(u-Q_hu),\nabla_we_h)_{\T_h}|&\le& C\3bar u-Q_hu\3bar \3bar e_h\3bar\nonumber\\
&\le& Ch^{2k}|u|^2_{k+1}+\frac14\3bar e_h\3bar^2.\label{eee4}
\end{eqnarray}
Combining the estimates (\ref{eee3}) and  (\ref{eee4}) with (\ref{eee1}), we arrive
\[
\3bar e_h\3bar \le Ch^{k}|u|_{k+1},
\]
which completes the proof.
\end{proof}

The estimates (\ref{eee2}) and (\ref{err1}) imply
\begin{equation}\label{err10}
\3bar Q_hu-u_h\3bar \le Ch^{k}|u|_{k+1}.
\end{equation}

\subsection{Error Estimates in $L^2$ Norm}

The standard duality argument is used to obtain $L^2$ error estimate.
Recall $e_h=\{e_0,e_b\}=u-u_h$ and $\epsilon_h=\{\epsilon_0,\epsilon_b\}=Q_hu-u_h$.
The dual problem seeks $\Phi\in H_0^1(\Omega)$ satisfying
\begin{eqnarray}
-\nabla\cdot a\nabla\Phi&=& \epsilon_0\quad
\mbox{in}\;\Omega.\label{dual}
\end{eqnarray}
Assume that the following $H^{2}$-regularity holds
\begin{equation}\label{reg}
\|\Phi\|_2\le C\|\epsilon_0\|.
\end{equation}

\begin{theorem} Let $u_h\in V_h$ be the weak Galerkin finite element solution of (\ref{wg}). Assume that the
exact solution $u\in H^{k+1}(\Omega)$ and (\ref{reg}) holds true.
 Then, there exists a constant $C$ such that
\begin{equation}\label{err2}
\|Q_0u-u_0\| \le Ch^{k+1}|u|_{k+1}.
\end{equation}
\end{theorem}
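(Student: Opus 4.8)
The plan is to use the standard duality argument based on the dual problem \eqref{dual} with $H^2$-regularity \eqref{reg}. First I would test the dual equation $-\nabla\cdot a\nabla\Phi=\epsilon_0$ against $\epsilon_0$, so that $\|\epsilon_0\|^2 = (-\nabla\cdot a\nabla\Phi,\epsilon_0)_{\T_h}$, and integrate by parts elementwise. This brings in boundary terms on $\partial\T_h$; since $\Phi\in H_0^1(\Omega)$ and $\nabla\Phi$ is single-valued across interior edges, the boundary sum can be reorganized exactly as in \eqref{m0}--\eqref{j1} with $u$ replaced by $\Phi$ and $v$ replaced by $\epsilon_h$. Invoking \eqref{key1} ($\nabla_w\Phi=\Q_h\nabla\Phi$), \eqref{d-d}, and the definitions of $e_1,e_2$, I would obtain a representation of the form
\begin{equation*}
\|\epsilon_0\|^2 = (a\nabla_w\Phi,\nabla_w\epsilon_h)_{\T_h} - e_1(\Phi,\epsilon_h) - e_2(\Phi,\epsilon_h) + (f,\epsilon_0) - \text{(lower order)},
\end{equation*}
but more precisely, mimicking the derivation of the error equation \eqref{ee}, I expect the clean identity $\|\epsilon_0\|^2 = (a\nabla_w\Phi,\nabla_w\epsilon_h)_{\T_h} - e_1(\Phi,\epsilon_h) - e_2(\Phi,\epsilon_h)$ after using that $\epsilon_h\in V_h^0$ (so $\l a\nabla\Phi\cdot\bn,\epsilon_b\r_{\partial\T_h}=0$) and the definition of $\Q_h$.

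Next I would split $(a\nabla_w\Phi,\nabla_w\epsilon_h)_{\T_h}$ using the symmetry of $a$ and the error equation \eqref{ee}. Writing $\nabla_w\epsilon_h = \nabla_w(Q_hu-u_h) = \nabla_w e_h - \nabla_w(u-Q_hu)$, I get $(a\nabla_w\Phi,\nabla_w\epsilon_h)_{\T_h} = (a\nabla_w e_h,\nabla_w\Phi)_{\T_h} - (a\nabla_w(u-Q_hu),\nabla_w\Phi)_{\T_h}$. The first piece I would handle by applying \eqref{ee} with $v=Q_h\Phi$ (note $Q_h\Phi\in V_h^0$ since $\Phi\in H_0^1$), after first replacing $\nabla_w\Phi$ by $\nabla_w Q_h\Phi$ up to the controllable error $\nabla_w(\Phi-Q_h\Phi)$; this yields terms $e_1(u,Q_h\Phi)+e_2(u,Q_h\Phi)$ plus a cross term $(a\nabla_w e_h,\nabla_w(\Phi-Q_h\Phi))_{\T_h}$.

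The core of the estimate is then bounding each resulting term by $Ch^{k+1}|u|_{k+1}$. For $(a\nabla_w e_h,\nabla_w(\Phi-Q_h\Phi))_{\T_h}$ and $(a\nabla_w(u-Q_hu),\nabla_w\Phi)_{\T_h}$ I would use Cauchy--Schwarz, \eqref{eee2} applied to $\Phi$ (giving $\3bar\Phi-Q_h\Phi\3bar\le Ch|\Phi|_2$), \eqref{err1}, and \eqref{reg}. For the four $e_i$-type terms, the key point is that each is a sum of boundary integrals of the form $\l a(\nabla w - \text{projection})\cdot\bn,\cdot\r_{\partial\T_h}$; for the $w=u$ arguments one factor is $O(h^k)$ from the $H^{k+1}$ regularity of $u$ while the test-function factor involving $\Phi$ or $Q_h\Phi$ must now contribute an \emph{extra} power of $h$ beyond what was used in \eqref{mmm1}--\eqref{mmm4}, using only $|\Phi|_2$ — this is where I would need a sharper trace/approximation estimate on $Q_b\Phi_0-\Phi_b$ and $\Phi_0-Q_b\Phi_0$ showing they are $O(h^{3/2})$ in $L^2(\pT)$ rather than $O(h^{1/2})$, i.e. one gains $h$ because $\Phi$ has one more derivative than a generic $H^1$ function. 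Symmetrically, for the terms with $w=\Phi$ tested against $\epsilon_h$, I would use $|\Phi|_2$ for one factor ($O(h)$) and $\3bar\epsilon_h\3bar\le Ch^k|u|_{k+1}$ from \eqref{err10} for the other. Collecting everything gives $\|\epsilon_0\|^2 \le Ch^{k+1}|u|_{k+1}\|\epsilon_0\|$, hence $\|\epsilon_0\|=\|Q_0u-u_0\|\le Ch^{k+1}|u|_{k+1}$. I expect the main obstacle to be the careful bookkeeping in the $e_1,e_2$ terms with $w=u$: one must verify that the $\Phi$-dependent boundary factor really does supply the full extra factor $h$ using only $H^2$-regularity of $\Phi$ (via projection error estimates combined with the trace inequality \eqref{trace}), since naively reusing the bounds \eqref{mmm1}--\eqref{mmm4} would only give $Ch^k$ and lose the gain. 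A secondary technical point is correctly tracking the boundary terms in the duality integration by parts so that they assemble into exactly $e_1(\Phi,\epsilon_h)+e_2(\Phi,\epsilon_h)$, which relies on $\epsilon_b$ vanishing on $\partial\Omega$ and on $\nabla\Phi\cdot\bn$ being single-valued across interior edges.
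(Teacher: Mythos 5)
Your overall plan reproduces the paper's duality argument faithfully: the identity $\|\epsilon_0\|^2=(a\nabla_w\Phi,\nabla_w\epsilon_h)_{\T_h}-e_1(\Phi,\epsilon_h)-e_2(\Phi,\epsilon_h)$, the splitting $\nabla_w\epsilon_h=\nabla_we_h+\nabla_w(Q_hu-u)$ and $\nabla_w\Phi=\nabla_wQ_h\Phi+\nabla_w(\Phi-Q_h\Phi)$, the use of the error equation with $v=Q_h\Phi$, and the observation that in $e_1(u,Q_h\Phi)$, $e_2(u,Q_h\Phi)$ the $\Phi$-dependent edge factors are $O(h^{3/2})$ (since $Q_bQ_0\Phi-Q_b\Phi=Q_b(Q_0\Phi-\Phi)$, etc.) are all exactly what the paper does. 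The terms $e_1(\Phi,\epsilon_h)$, $e_2(\Phi,\epsilon_h)$ and the cross term $(a\nabla_we_h,\nabla_w(\Phi-Q_h\Phi))_{\T_h}$ are also handled as in the paper.

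However, there is a genuine gap in your treatment of the remaining term $(a\nabla_w(Q_hu-u),\nabla_w\Phi)_{\T_h}$. You propose to bound it by Cauchy--Schwarz together with (\ref{eee2}) and (\ref{reg}), but here the second factor is the full weak gradient $\nabla_w\Phi$, not a projection error, so Cauchy--Schwarz only yields $\3bar Q_hu-u\3bar\,\3bar\Phi\3bar\le Ch^{k}|u|_{k+1}\|\Phi\|_1$ --- one power of $h$ short, and (\ref{eee2}) applied to $\Phi$ is of no use for this factor. The paper closes this gap with an orthogonality argument that your sketch does not contain: writing $\nabla_w\Phi=\Q_h\nabla\Phi$ by (\ref{key1}) so that the term equals $(\nabla_w(Q_hu-u),a\nabla\Phi)_{\T_h}$, and then proving elementwise that $(\nabla_w(Q_hu-u),a\Q_0\nabla\Phi)_T=0$, where $\Q_0$ is the $L^2$ projection onto $[P_0(T)]^d$; this identity follows from the definition (\ref{d-d}), integration by parts, the fact that $a\Q_0\nabla\Phi$ is constant on $T$ (so its divergence vanishes and its normal flux is a constant on each edge, hence lies in $P_{k-1}(e)$ since $k\ge1$), and the defining properties of $Q_0$ and $Q_b$, which give $\langle Q_bu-u,a\Q_0\nabla\Phi\cdot\bn\rangle_{\pT}=0$. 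One may then replace $a\nabla\Phi$ by $a(\nabla\Phi-\Q_0\nabla\Phi)$ and gain the missing factor $h$ from $\|\nabla\Phi-\Q_0\nabla\Phi\|\le Ch\|\Phi\|_2$, giving $Ch^{k+1}|u|_{k+1}\|\Phi\|_2$ for this term. Without this (or an equivalent superconvergence mechanism for $\nabla_w(Q_hu-u)$ against smooth fields), your argument only delivers $\|Q_0u-u_0\|\le Ch^{k}|u|_{k+1}$, not the claimed $O(h^{k+1})$ bound.
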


\begin{proof}
By testing (\ref{dual}) with $\epsilon_0$ we obtain
\begin{eqnarray}\nonumber
\|\epsilon_0\|^2&=&-(\nabla\cdot (a\nabla\Phi),\epsilon_0)\\
&=&(a\nabla \Phi,\ \nabla \epsilon_0)_{\T_h}-\l
a\nabla\Phi\cdot\bn,\ \epsilon_0-\epsilon_b\r_{\partial\T_h}\nonumber\\
&=&(a\nabla \Phi,\ \nabla \epsilon_0)_{\T_h}-\l a\nabla\Phi\cdot\bn,\ Q_b \epsilon_0- \epsilon_b\r_{\partial\T_h}-\l a\nabla\Phi\cdot\bn,\ \epsilon_0- Q_b\epsilon_0\r_{\partial\T_h}\nonumber\\
&=&(a\nabla \Phi,\ \nabla \epsilon_0)_{\T_h}-\l
a\nabla\Phi\cdot\bn,\ Q_b \epsilon_0- \epsilon_b\r_{\partial\T_h}-e_2(\Phi,\epsilon_h),\label{jw.08}
\end{eqnarray}
where we have used the fact that $\epsilon_b=0$ on $\partial\Omega$.
Setting $u=\Phi$ and $v=\epsilon_h$ in (\ref{j1}) yields
\begin{eqnarray}
(a\nabla\Phi,\;\nabla \epsilon_0)_{\T_h}&=&(a \nabla_w \Phi, \nabla_w \epsilon_h)_{\T_h}+\langle Q_b\epsilon_0-\epsilon_b,a\Q_h\nabla \Phi\cdot\bn\rangle_{\partial\T_h}.\label{j1-new}
\end{eqnarray}
Substituting (\ref{j1-new}) into (\ref{jw.08}) gives
\begin{eqnarray}
\|\epsilon_0\|^2&=&(a\nabla_w \epsilon_h,\ \nabla_w \Phi)_{\T_h}-e_1(\Phi, \epsilon_h)-e_2(\Phi, \epsilon_h)\nonumber\\
&=&(a\nabla_w e_h,\ \nabla_w \Phi)_{\T_h}+(a\nabla_w (Q_hu-u),\ \nabla_w \Phi)_{\T_h}-e_1(\Phi, \epsilon_h)-e_2(\Phi, \epsilon_h)\nonumber\\
&=&(a\nabla_w e_h,\ \nabla_w Q_h\Phi)_{\T_h}+(a\nabla_w e_h,\ \nabla_w (\Phi-Q_h\Phi))_{\T_h}\nonumber\\
&+&(a\nabla_w (Q_hu-u),\ \nabla_w \Phi)_{\T_h}-e_1(\Phi, \epsilon_h)-e_2(\Phi, \epsilon_h)\nonumber\\
&=&e_1(u,Q_h\Phi)+e_2(u,Q_h\Phi)-e_1(\Phi,\epsilon_h)-e_2(\Phi,\epsilon_h)\nonumber\\
&+&(a\nabla_w e_h,\ \nabla_w (\Phi-Q_h\Phi))_{\T_h}+(a\nabla_w (Q_hu-u),\ \nabla_w \Phi)_{\T_h}.\label{m2}
\end{eqnarray}

Let us bound all the terms on the right hand side of (\ref{m2}) one by
one. Using the Cauchy-Schwarz  inequality and the definition of $Q_b$, we obtain
\begin{eqnarray}
|e_1(u,Q_h\Phi)|&=&\left|\sum_{T\in\T_h} \langle a(\nabla u-\bbQ_h\nabla
u)\cdot\bn,\; Q_bQ_0\Phi-Q_b\Phi\rangle_\pT \right|\nonumber\\
&=&\left|\sum_{T\in\T_h} \langle a(\nabla u-\bbQ_h\nabla
u)\cdot\bn,\; Q_b(Q_0\Phi-\Phi)\rangle_\pT \right|\nonumber\\
&\le&C\sum_{T\in\T_h} \|\nabla u-\bbQ_h\nabla u\|_{\pT}\| Q_0\Phi-\Phi\|_\pT \nonumber\\
&\le& C\left(\sum_{T\in\T_h}\|\nabla u-\bbQ_h\nabla u\|^2_\pT\right)^{1/2}\left(\sum_{T\in\T_h}\|Q_0\Phi-\Phi\|^2_\pT\right)^{1/2}\label{1st-term}
\end{eqnarray}
From the trace inequality (\ref{trace}) we have
$$
\left(\sum_{T\in\T_h}\|Q_0\Phi-\Phi\|^2_\pT\right)^{1/2} \leq C
h^{\frac32}\|\Phi\|_2
$$
and
$$
\left(\sum_{T\in\T_h}\|a(\nabla u-\bbQ_h\nabla
u)\|^2_\pT\right)^{1/2} \leq Ch^{k-\frac12}\|u\|_{k+1}.
$$
Combining  the above two estimates with (\ref{1st-term}) gives
\begin{eqnarray}\label{1st-term-complete}
|e_1(u,Q_h\Phi)| \leq C h^{k+1} \|u\|_{k+1}
\|\Phi\|_2.
\end{eqnarray}

Using the Cauchy-Schwarz inequality, the trace inequality (\ref{trace}) and (\ref{ellipticity}), we have
\begin{eqnarray*}
|e_2(u,Q_h\Phi)|&=&\left|\sum_{T\in\T_h}\langle a\nabla u\cdot\bn, Q_0\Phi-Q_bQ_0\Phi\rangle_\pT\right|\\
&=&\left|\sum_{T\in\T_h}\langle a(\nabla u-\Q_{k-1}\nabla u)\cdot\bn,  Q_0\Phi-Q_bQ_0\Phi\rangle_\pT\right|\\
&\le & C \sum_{T\in\T_h}\|\nabla u-\Q_{k-1}\nabla u\|_{\pT}
\| Q_0\Phi-Q_bQ_0\Phi\|_\pT\nonumber\\
&\le & Ch^{k-1/2}|u|_{k+1}h^{3/2}\|\Phi\|_2\\
&\le & Ch^{k+1}|u|_{k+1}\|\Phi\|_2.
\end{eqnarray*}

It follows from (\ref{mmm1}), (\ref{mmm4}) and (\ref{err10}) that
\begin{eqnarray*}
|e_1(\Phi, \epsilon_h)|&\le&Ch|\Phi|_{2}\3bar \epsilon_h\3bar\le Ch^{k+1}|u|_{k+1}\|\Phi\|_2\label{t30},
\end{eqnarray*}
and
\begin{eqnarray*}
|e_2(\Phi, \epsilon_h)|&\le&Ch|\Phi|_{2}\3bar \epsilon _h\3bar\le Ch^{k+1}|u|_{k+1}\|\Phi\|_2\label{t31}.
\end{eqnarray*}
The estimates (\ref{eee2}) and (\ref{err1}) imply
\begin{eqnarray*}
|(a\nabla_w e_h,\ \nabla_w (\Phi-Q_h\Phi))_{\T_h}|&\le& C\3bar e_h\3bar \3bar \Phi-Q_h\Phi\3bar\le Ch^{k+1}|u|_{k+1}\|\Phi\|_2.
\end{eqnarray*}
To bound the term $(a\nabla_w (Q_hu-u),\ \nabla_w \Phi)_{\T_h}$, we define a $L^2$ projection element-wise onto $[P_0(T)]^d$ denoted by $\Q_0$. Then it follows from the definition of weak gradient (\ref{d-d}) and integration by parts,
\begin{eqnarray*}
&& (\nabla_w (Q_hu-u),\ a\Q_0\nabla\Phi)_{T}\\
&=&(\nabla (Q_0u-u),a\Q_0\nabla\Phi)_T+ \l (Q_b(Q_bu-u-(Q_0u-u)), \Q_0\nabla\Phi\cdot\bn\r_\pT\\
&=&(\nabla (Q_0u-u),a\Q_0\nabla\Phi)_T+\l Q_bu-Q_0u, a\Q_0\nabla\Phi\cdot\bn\r_\pT\\
&=&-(Q_0u-u,\nabla\cdot a\Q_0\nabla\Phi)_T+ \l Q_0u-u+Q_bu-Q_0u, a\Q_0\nabla\Phi\cdot\bn\r_\pT\\
&=&\l Q_bu-u, a\Q_0\nabla_w\Phi\cdot\bn\r_\pT=0.
\end{eqnarray*}
Using the equation above, (\ref{key1}) and (\ref{eee2}) and the definition of $\Q_0$, we have
\begin{eqnarray*}
|(a\nabla_w (Q_hu-u),\ \nabla_w \Phi)_{\T_h}|&=&|(a\nabla_w (Q_hu-u),\ \Q_h\nabla \Phi)_{\T_h}|\\
&=&|(\nabla_w (Q_hu-u),\ a\nabla\Phi)_{\T_h}|\\
&=&|(\nabla_w (Q_hu-u),\ a(\nabla\Phi-\Q_0\nabla\Phi))_{\T_h}|\\
&\le& Ch^{k+1}|u|_{k+1}\|\Phi\|_2.
\end{eqnarray*}

Combining all the estimates above with (\ref{m2})  yields
$$
\|\epsilon_0\|^2 \leq C h^{k+1}|u|_{k+1} \|\Phi\|_2.
$$
Using the regularity assumption (\ref{reg}) and the estimate above, we derived (\ref{err2}).
\end{proof}

\section{Numerical Experiments}\label{Section:numerical-experiments}

\subsection{Example 1}
Consider problem (\ref{pde}) with $\Omega=(0,1)^2$ and $a=\p{1&0\\0&1}$.
The source term $f$ and the boundary value $g$ are chosen so that the exact solution is
\an{\label{s1}
    u(x,y)=\sin(x)\sin(\pi y). }
We use uniform triangular meshes as shown in Figure \ref{g-triangle}.
The error and the order of convergence are listed in Table \ref{t1},
   where we have optimal order of convergence for $k\ge 1$ in both $L^2$ norm and
    $H^1$-like triple-bar norm.

\begin{figure}[h!]
 \begin{center} \setlength\unitlength{1.25pt}
\begin{picture}(260,80)(0,0)
  \def\tr{\begin{picture}(20,20)(0,0)\put(0,0){\line(1,0){20}}\put(0,20){\line(1,0){20}}
          \put(0,0){\line(0,1){20}} \put(20,0){\line(0,1){20}} \put(20,0){\line(-1,1){20}}
   \end{picture}}
 {\setlength\unitlength{5pt}
 \multiput(0,0)(20,0){1}{\multiput(0,0)(0,20){1}{\tr}}}

  {\setlength\unitlength{2.5pt}
 \multiput(45,0)(20,0){2}{\multiput(0,0)(0,20){2}{\tr}}}

  \multiput(180,0)(20,0){4}{\multiput(0,0)(0,20){4}{\tr}}

 \end{picture}\end{center}
\caption{Example 1.  The first three triangular grids.}
\label{g-triangle}
\end{figure}
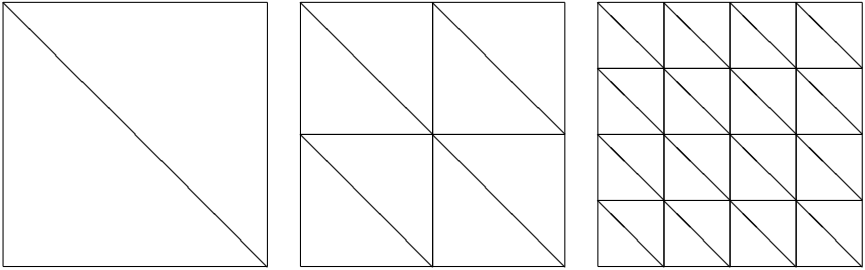

\begin{table}[h!]
    \caption{Example 1. The $P_k-P_{k-1}-[P_{k+1}]^2$ element,
      on triangular grids shown in Figure \ref{g-triangle}.}
    \label{t1}
    \begin{center}
    \begin{tabular}{c|c|cc|cc}  
        \hline
$k$&$\T_l$&$\3bar Q_hu-u_h\3bar$&Rate & $\|Q_hu-u_h\|$ & Rate \\
        \hline
 &6&   0.3871E-01 & 1.00&   0.3306E-03 & 1.99 \\
1&7&   0.1937E-01 & 1.00&   0.8279E-04 & 2.00 \\
 &8&   0.9685E-02 & 1.00&   0.2070E-04 & 2.00 \\
\hline
  & 6&   0.4131E-03 & 1.98&   0.1783E-05 & 2.95 \\
2 & 7&   0.1038E-03 & 1.99&   0.2268E-06 & 2.97 \\
  & 8&   0.2602E-04 & 2.00&   0.2859E-07 & 2.99 \\
\hline
  & 5&   0.2925E-04 & 2.99&   0.1515E-06 & 3.98 \\
3 & 6&   0.3665E-05 & 3.00&   0.9518E-08 & 3.99 \\
  & 7&   0.4587E-06 & 3.00&   0.5963E-09 & 4.00 \\
\hline
   & 5&   0.4091E-06 & 3.99&   0.1592E-08 & 4.97 \\
4 & 6&   0.2568E-07 & 3.99&   0.5026E-10 & 4.99 \\
  & 7&   0.1608E-08 & 4.00&   0.1610E-11 & 4.96 \\
\hline
\end{tabular}
\end{center}
\end{table}

\subsection{Example 2}
Consider problem (\ref{pde}) with $\Omega=(0,1)^2$ and $a=\p{1&0\\0&1}$.
The source term $f$ and the boundary value $g$ are chosen so that the exact solution is
\an{\label{s2}
    u(x,y)=\sin(\pi x)\sin(\pi y). }
We use triangular meshes as shown in Figure \ref{g-4t} for Example 2.
The error and the order of convergence are listed in Table \ref{t2},
   where we have optimal order of convergence for $k\ge 1$ in both $L^2$ norm and
    $H^1$-like triple-bar norm.

\begin{figure}[!h]
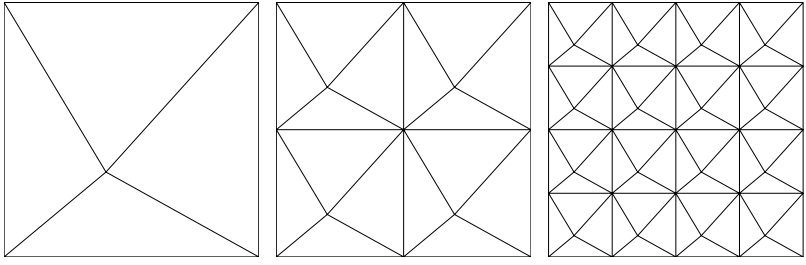

    \centering
   {\includegraphics[width=0.26\linewidth]{g4t1.pdf}} \
   {\includegraphics[width=0.26\linewidth]{g4t2.pdf}} \
   {\includegraphics[width=0.26\linewidth]{g4t3.pdf}}
    \caption{Example 2. The first three triangular grids. }
    \label{g-4t}
\end{figure}

\begin{table}[h!]
    \caption{Example 2. The $P_k-P_{k-1}-[P_{k+1}]^2$ element,
      on rectangular grids shown in Figure \ref{g-4t}.}
    \label{t2}
    \begin{center}
    \begin{tabular}{c|c|cc|cc}  
        \hline
$k$&$\T_l$&$\3bar Q_hu-u_h\3bar$&Rate & $\|Q_hu-u_h\|$ & Rate \\
        \hline
  & 6&    0.120E-02 & 2.01&    0.141E-01 & 2.00 \\
1 & 7&    0.300E-03 & 2.00&    0.354E-02 & 2.00 \\
  & 8&    0.749E-04 & 2.00&    0.885E-03 & 2.00 \\
\hline
  & 6&   0.5734E-03 & 2.00&   0.1482E-05 & 3.00 \\
2 & 7&   0.1434E-03 & 2.00&   0.1852E-06 & 3.00 \\
  & 8&   0.3584E-04 & 2.00&   0.2314E-07 & 3.00 \\
\hline
  & 5&   0.3645E-04 & 3.00&   0.1360E-06 & 3.99 \\
3 & 6&   0.4559E-05 & 3.00&   0.8517E-08 & 4.00 \\
  & 7&   0.5699E-06 & 3.00&   0.5326E-09 & 4.00 \\
\hline
  & 5&   0.4222E-06 & 4.00&   0.9119E-09 & 5.01 \\
4 & 6&   0.2639E-07 & 4.00&   0.2846E-10 & 5.00 \\
  & 7&   0.1650E-08 & 4.00&   0.1110E-11 & 4.68 \\
\hline
\end{tabular}
\end{center}
\end{table}

\subsection{Example 3}
Consider problem (\ref{pde}) with $\Omega=(0,1)^2$ and $a=\p{1&0\\0&1}$.
The source term $f$ and the boundary value $g$ are chosen so that the exact solution is
\an{\label{s3}
    u(x,y)=e^{\pi x}\cos(\pi y). }
We use rectangular meshes as shown in Figure \ref{g-square} for Example 3.
The error and the order of convergence are listed in Table \ref{t3},
   where we have one order supconvergence for $k=1$ in  $H^1$-like triple-bar norm,
    one order supconvergence for $k=2$ in both $L^2$ norm and
    $H^1$-like triple-bar norm,
      and one order supconvergence for $k=3$ in both $L^2$ norm and
    $H^1$-like triple-bar norm.

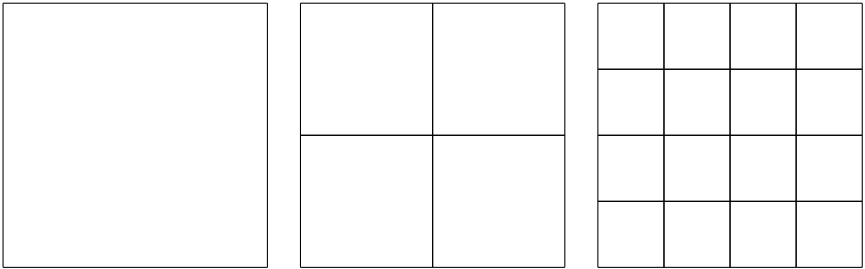
\begin{figure}[h!]
 \begin{center} \setlength\unitlength{1.25pt}
\begin{picture}(260,80)(0,0)
  \def\tr{\begin{picture}(20,20)(0,0)\put(0,0){\line(1,0){20}}\put(0,20){\line(1,0){20}}
          \put(0,0){\line(0,1){20}} \put(20,0){\line(0,1){20}}
   \end{picture}}
 {\setlength\unitlength{5pt}
 \multiput(0,0)(20,0){1}{\multiput(0,0)(0,20){1}{\tr}}}

  {\setlength\unitlength{2.5pt}
 \multiput(45,0)(20,0){2}{\multiput(0,0)(0,20){2}{\tr}}}

  \multiput(180,0)(20,0){4}{\multiput(0,0)(0,20){4}{\tr}}

 \end{picture}\end{center}
\caption{Example 3.  The first three rectangular grids.}
\label{g-square}
\end{figure}

\begin{table}[h!]
    \caption{Example 3. The $P_k-P_{k-1}-[P_{k+1}]^2$ element,
      on rectangular grids shown in Figure \ref{g-square}.}
    \label{t3}
    \begin{center}
    \begin{tabular}{c|c|cc|cc}  
        \hline
$k$&$\T_l$&$\3bar Q_hu-u_h\3bar$&Rate & $\|Q_hu-u_h\|$ & Rate \\
        \hline
  & 6&    0.141E-01 & 2.00&    0.120E-02 & 2.01 \\
1 & 7&    0.354E-02 & 2.00&    0.300E-03 & 2.00 \\
  & 8&    0.885E-03 & 2.00&    0.749E-04 & 2.00 \\
\hline
  & 6&    0.158E-03 & 3.00&    0.113E-05 & 4.00 \\
2 & 7&    0.197E-04 & 3.00&    0.709E-07 & 4.00 \\
  & 8&    0.246E-05 & 3.00&    0.444E-08 & 4.00 \\
\hline
  & 4&    0.251E-03 & 4.80&    0.409E-05 & 5.28 \\
3 & 5&    0.143E-04 & 4.13&    0.128E-06 & 4.99 \\
  & 6&    0.889E-06 & 4.01&    0.407E-08 & 4.98 \\
\hline
\end{tabular}
\end{center}
\end{table}

\subsection{Example 4}
Consider problem (\ref{pde}) with $\Omega=(0,1)^2$ and $a=\p{1&0\\0&1}$.
The source term $f$ and the boundary value $g$ are chosen so that the exact solution is
\an{\label{s4}
    u(x,y)=e^{2x-1}(y-y^3). }
We use polygonal meshes (mixing dodecagons(12 sided) and heptagons(7 sided))
   as shown in Figure \ref{g-12} for Example 4.
The error and the order of convergence are listed in Table \ref{t3},
   where we have optimal order convergence for all $k\ge 1$ in  in both $L^2$ norm and
    $H^1$-like triple-bar norm.

\begin{figure}[!h]
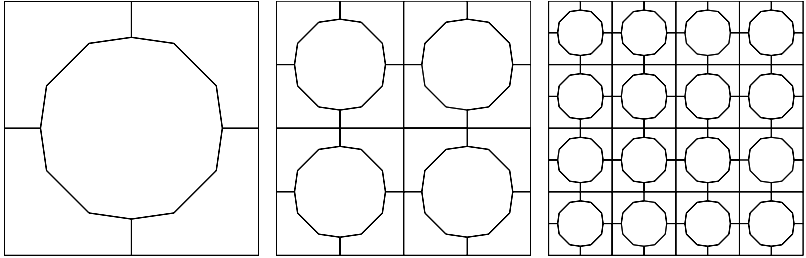

    \centering
   {\includegraphics[width=0.26\linewidth]{gp1.pdf}} \
   {\includegraphics[width=0.26\linewidth]{gp2.pdf}} \
   {\includegraphics[width=0.26\linewidth]{gp3.pdf}}
    \caption{Example 4. The first three polygonal grids. }
    \label{g-12}
\end{figure}

\begin{table}[h!]
    \caption{Example 4. The $P_k-P_{k-1}-[P_{k+2}]^2$ element,
      on polygonal grids shown in Figure \ref{g-12}.}
    \label{t4}
    \begin{center}
    \begin{tabular}{c|c|cc|cc}
        \hline
$k$&$\T_l$&$\3bar Q_hu-u_h\3bar$&Rate & $\|Q_hu-u_h\|$ & Rate \\
        \hline
  & 6&   0.3735E-01 & 1.00&   0.7856E-04 & 2.00\\
1 & 7&   0.1868E-01 & 1.00&   0.1966E-04 & 2.00\\
  & 8&   0.9339E-02 & 1.00&   0.4916E-05 & 2.00\\
\hline
  & 5&   0.1504E-02 & 1.98&   0.3242E-05 & 2.95\\
2 & 6&   0.3782E-03 & 1.99&   0.4131E-06 & 2.97\\
  & 7&   0.9482E-04 & 2.00&   0.5216E-07 & 2.99\\
\hline
  & 4&   0.1267E-03 & 2.97&   0.4106E-06 & 3.95\\
3 & 5&   0.1600E-04 & 2.99&   0.2636E-07 & 3.96\\
  & 6&   0.2010E-05 & 2.99&   0.1673E-08 & 3.98\\
\hline
  & 2&   0.5517E-03 & 3.98&   0.5905E-05 & 5.26\\
4 & 3&   0.3518E-04 & 3.97&   0.1699E-06 & 5.12\\
  & 4&   0.2234E-05 & 3.98&   0.5253E-08 & 5.02\\
\hline
\end{tabular}
\end{center}
\end{table}

\end{document}